\newtheorem{theorem}{Theorem}[section]
\newtheorem{lemma}[theorem]{Lemma}
\newtheorem{corollary}[theorem]{Corollary}
\newtheorem{proposition}[theorem]{Proposition}
\theoremstyle{definition}
\newtheorem{definition}[theorem]{Definition}
\theoremstyle{remark}
\newtheorem{remark}[theorem]{Remark}
\numberwithin{equation}{section}
\newcommand{\Kh}{\operatorname{Kh}}
\newcommand{\KhL}{\operatorname{Kh}_{\operatorname{Lee}}}
\newcommand{\HFK}{\operatorname{HFK}}
\newcommand{\Res}{\operatorname{Res}}
\newcommand{\lk}{\operatorname{lk}}
\begin{document}

\title{The link concordance invariant from Lee homology}

\author{John Pardon}

\date{23 July 2011; Revised 9 February 2012}

\maketitle

\begin{abstract}
We use the knot homology of Khovanov and Lee to construct link concordance invariants generalizing the Rasmussen 
$s$-invariant of knots.  The relevant invariant for a link is a filtration on a vector space of dimension 
$2^{\left|L\right|}$.  The basic properties of the $s$-invariant all extend to the case of links; in particular, 
any orientable cobordism $\Sigma$ between links induces a map between their corresponding vector spaces which 
is filtered of degree $\chi(\Sigma)$.  A corollary of this construction is that any component-preserving orientable 
cobordism from a $\Kh$-thin link to a link split into $k$ components must have genus at least $\lfloor\frac k2\rfloor$.  
In particular, no quasi-alternating link is concordant to a split link.
\end{abstract}

%% keywords: Khovanov homology, link concordance, link cobordism, Rasmussen s-invariant, slice genus
%% msc primary: 57M27, 
%% msc secondary: 57Q60, 57M25

\section{Introduction}

Using Lee's modification \cite{lee} of Khovanov homology \cite{khovanov}, Rasmussen \cite{rasmussen} introduced 
for every knot $K$ an even integer valued invariant, known as the $s$-invariant.  It 
shares some of the basic properties of the classical knot signature; in particular it is a homomorphism from the group of smooth concordance 
classes of knots to $2\mathbb Z$, and gives a lower bound for twice the smooth slice genus (though the signature also 
does this in the \emph{topological} category, whereas the $s$-invariant does not).  The definition of the $s$-invariant 
is purely combinatorial, and, like many other knot invariants coming out of quantum algebra, it so far lacks any intrinsic geometric definition.  
One of the main reasons for interest in this invariant is that it is by definition algorithmically computable (though some 
cleverness is needed to do large calculations quickly, c.f.\ Bar-Natan \cite{bnalg} and Freedman, Gompf, Morrison, and Walker \cite{spc4calc}), 
and is one of the few tools known to give useful lower bounds on the smooth slice genus of knots.  In particular, 
Rasmussen \cite{rasmussen} showed via direct calculation that $s(T_{p,q})=(p-1)(q-1)$, thus proving the Milnor conjecture that 
$g_4(T_{p,q})=\frac 12(p-1)(q-1)$, a hard theorem of Kronheimer and Mrowka \cite{kmmilnord1,kmmilnord2} \cite{kmmilnorsw} 
proved (twice) using gauge theory.

In this paper, we consider the natural generalization of the $s$-invariant to a concordance invariant of links.  
Everything we do will be in the smooth category.  Since the $s$-invariant 
can detect the deep differences between the smooth and topological categories in four dimensions, this restriction is 
in fact necessary for this theory.  In particular, knot and link concordance is meant in the smooth sense.

Let us denote the Khovanov--Lee homology groups of a link by $\KhL^\ast(L)$.  Lee \cite{lee} showed that 
$\KhL^\ast(L)$ is a surprisingly simple group: there is an isomorphism 
$\bigoplus_{\text{orientations of }L}\mathbb Q\xrightarrow{\sim}\KhL^\ast(L)$.  
We denote the former group by $\mathbb O(L)$, and in Section \ref{orientationgroupsec}, we will 
define it as a functor (we will specify the maps associated to cobordisms).  Kevin Walker 
\cite{kwalker} informs the author that (with suitable choice of Lee deformation parameter) there is an equivalence of functors 
between $\mathbb O$ and $\KhL^\ast$ (Rasmussen \cite{rasmussen} \cite{rasrefined} has proved a sort of approximate 
equivalence of functors).  The natural generalization of the $s$-invariant is thus the pull-back of 
the $s$-filtration on $\KhL^\ast(L)$ to a filtration on $\mathbb O(L)$.  To get a numerical invariant, 
we can take the following (which is perhaps slightly coarser).

\newcommand{\definitionofinvariant}{For an oriented link $L\subseteq\mathbb R^3$, we associate 
a function $d_L:\mathbb Z\times\mathbb Z\to\mathbb Z_{\geq 0}$ so that $d_L(h,s)$ gives the 
dimensions of the associated graded pieces $(\KhL^h(L))^s/(\KhL^h(L))^{s+1}$ (where $(\KhL^\ast(L))^s$ 
denotes the subspace of elements of filtration level $\geq s$).}

\begin{definition}
\definitionofinvariant
\end{definition}

For a knot $K$, it is a theorem of Rasmussen \cite{rasmussen} that $d_K(0,s(K)\pm 1)=1$ and $d_K$ is otherwise zero 
(this being the defining property of the $s$-invariant).  For a link $L$, the vector space $\KhL^\ast(L)$ has dimension 
$2^{\left|L\right|}$, and, as one might expect, the support of the function $d_L$ can be much more complicated as we 
shall see in a few examples.

\begin{theorem}\label{mainthm2}
Let $L$ be a link with orientation $\mathfrak o$.  The invariant $d_L:\mathbb Z\times\mathbb Z\to\mathbb Z_{\geq 0}$ satisfies the following basic properties:
\begin{enumerate}
\item\label{supportmod}$\sum_{s\equiv\left|L\right|+k\mod 4}d_L(h,s)$ is zero if $k$ is odd, and if $k$ is even it 
equals one half the number of orientations $\mathfrak o_1$ of $L$ such that $\lk(\mathfrak o_1)-\lk(\mathfrak o)=-h$.  
Here $\lk(\mathfrak o)=\sum_{i<j}\lk(L_i^\mathfrak o,L_j^\mathfrak o)$ (sum over the components of $L$).
\item\label{tensor}$d_{L_1\sqcup L_2}=d_{L_1}\ast d_{L_2}$ (convolution).
\item$d_{\bar L}(h,s)=d_{L}(-h,-s)$.
\item\label{filtprop} If $\Sigma$ is a component-preserving orientable cobordism between $L_1$ and $L_2$ (i.e.\ $H_0(L_i)\overset\sim\to H_0(\Sigma)$), 
then $\sum_{s\geq a}d_{L_1}(h,s)\leq\sum_{s\geq a+\chi(\Sigma)}d_{L_2}(h,s)$ for all $h\in\mathbb Z$.
\item$d_L$ is a link concordance invariant.
\item\label{orchange}$d_{(L,\mathfrak o)}(h+\lk(\mathfrak o),s+3\lk(\mathfrak o))$ is independent of orientation $\mathfrak o$.
\end{enumerate}
\end{theorem}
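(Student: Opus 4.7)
The plan is to leverage Lee's identification $\KhL^\ast(L)\cong\mathbb O(L)$ so that each orientation $\mathfrak{o}'$ of $L$ produces a distinguished basis element $s_{\mathfrak{o}'}\in\KhL^\ast(L)$ with a computable bigrading. Items (1)--(3) and (6) will follow from direct computation of these bigradings together with the naturality of the identification under disjoint union, mirroring, and reorientation. Items (4) and (5) require the behaviour of the $s$-filtration under cobordism-induced maps.

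For (1), I would verify that $s_{\mathfrak{o}'}$ sits in homological degree controlled by $\lk(\mathfrak{o}')-\lk(\mathfrak{o})$ and in a specific $s$-filtration level whose residue modulo $4$ is determined by $|L|$ together with a parity coming from $\mathfrak{o}'$. The pair $\{\mathfrak{o}',-\mathfrak{o}'\}$ lies in a common homological degree, since $\lk$ is invariant under reversing every component, but in $s$-filtration levels differing by $2$; thus each such pair contributes exactly one basis element to a given mod-$4$ class of $s$, accounting for the factor of $\tfrac12$. For (2), the Khovanov--Lee complex of $L_1\sqcup L_2$ is the tensor product of those of $L_1$ and $L_2$, with additive bigrading, so the dimensions convolve. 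For (3), the mirror $\bar L$ has Khovanov complex dual to that of $L$ with both gradings negated, yielding $d_{\bar L}(h,s)=d_L(-h,-s)$.

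The main content is (4). Decomposing $\Sigma$ into elementary pieces (births, deaths, saddles) and applying Bar-Natan--Khovanov functoriality produces a map $\Sigma_\ast:\KhL^\ast(L_1)\to\KhL^\ast(L_2)$ that is filtered of degree $\chi(\Sigma)$, the filtered degree being verified piece by piece. To upgrade this to the claimed inequality on $d$, I need $\Sigma_\ast$ to be injective. Under the component-preserving hypothesis $H_0(L_i)\xrightarrow{\sim}H_0(\Sigma)$, each orientation of $L_2$ extends uniquely, up to simultaneous reversal on the components of $\Sigma$, to a compatible orientation of $L_1$, inducing a bijection between the orientation sets. The expected identity $\Sigma_\ast(s_{\mathfrak{o}_1})=\pm s_{\mathfrak{o}_2}$ for $\Sigma$-compatible $\mathfrak{o}_1,\mathfrak{o}_2$ (and zero for incompatible pairs) then exhibits $\Sigma_\ast$ as a bijection of canonical bases, hence injective, which delivers the filtered inequality. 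Establishing this explicit action on Lee generators, and in particular that the cobordism map on canonical classes saturates the filtered degree bound $\chi(\Sigma)$ at every elementary step, is the main obstacle.

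Items (5) and (6) are then short. A concordance has $\chi(\Sigma)=0$ and is component-preserving in both directions, so applying (4) to $\Sigma$ and to the reverse cobordism gives $d_{L_1}=d_{L_2}$. For (6), reversing the orientation on a sublink shifts the Khovanov--Lee bigrading by $\bigl(\lk(\mathfrak{o}')-\lk(\mathfrak{o}),\,3(\lk(\mathfrak{o}')-\lk(\mathfrak{o}))\bigr)$, which is precisely the shift $(h,s)\mapsto(h+\lk,\,s+3\lk)$ needed to make $d_{(L,\mathfrak{o})}(h+\lk(\mathfrak{o}),s+3\lk(\mathfrak{o}))$ independent of $\mathfrak{o}$.
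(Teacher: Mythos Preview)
Your proposal is correct and matches the paper's approach: the paper simply states that Theorem~\ref{mainthm2} follows directly from the properties of $\KhL^\ast$ listed in Theorem~\ref{khleedef}, and for the injectivity needed in (4) it cites Rasmussen's Proposition~4.1 (recorded here as Lemma~\ref{isomlemma}), which is exactly the computation on Lee's canonical generators that you outline. One minor imprecision worth fixing in your sketch of (1): the individual generators $s_{\mathfrak{o}'}$ are not themselves homogeneous for the mod~$4$ grading or the $s$-filtration---it is the combinations $s_{\mathfrak{o}'}\pm s_{\bar{\mathfrak{o}}'}$ that sit in mod~$4$ degrees differing by $2$, and this is what produces the factor of $\tfrac12$.
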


For links with a large number of components, it is reasonable to expect that the invariant $d_L$ will be a strong invariant of 
link concordance.  As one sees in the theorem above, the invariant $d_L$ is best suited for studying cobordisms 
which do not merge components of $L$.  In general, if one wants to derive information about a given orientable cobordism, 
then the relevant object is the $s$-filtration restricted to the subspace of $\mathbb O(L)$ generated by those orientations 
extending to orientations of the cobordism.  The larger this subspace, the more likely the invariant is to be useful.

Beliakova and Wehrli have defined an integer $s(L,\mathfrak o)$ for a link with an orientation \cite{wehrli}.  This corresponds 
to the $s$-filtration restricted to the $2$-dimensional subspace of $\KhL^\ast(L)$ generated by that orientation and its reverse.  
Just like for knots, one shows that on this subspace, the filtration is supported in two levels $s\pm 1$, and this defines $s(L,\mathfrak o)$.  
This invariant is best suited for studying oriented cobordisms which are allowed to merge components of $L$.  Examples 
show that the function $\mathfrak o\mapsto s(L,\mathfrak o)$ is a weaker invariant that $d_L$.  One expects 
that $d_L$ is  a weaker invariant that the filtration on $\mathbb O(L)$ but we don't have any examples to prove this 
at present (mainly because $d_L$ is often easy to derive from $\Kh^\ast$---which there exist programs to 
compute---whereas the $s$-filtration on $\mathbb O(L)$ is not).  We discuss examples in Section \ref{examples} and at 
the end of Section \ref{properties}.

In Section \ref{applications}, we use the invariants $d_L$ to derive the following corollary, which appears to be new.

\begin{corollary}\label{maincor}
A component-preserving orientable cobordism between a $\Kh$-thin link and a link split into $m$ components must have genus at least $\lfloor\frac m2\rfloor$.  
In particular, $\Kh$-thin links (in particular quasi-alternating links [Definition \ref{quasialterdef}]) are not concordant to split links.
\end{corollary}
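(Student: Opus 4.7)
The plan is to compare the $s$-filtered structure of $d_{L_1}$ and $d_{L_2}$ using the cobordism inequality (property \ref{filtprop}) in both directions, combined with the convolution formula for split links (property \ref{tensor}). Writing $W(L, h) := \max\{s : d_L(h, s) > 0\} - \min\{s : d_L(h, s) > 0\}$ for the width of the $s$-support of $d_L$ at height $h$, the first step is the observation that the reverse of a component-preserving orientable cobordism is again component-preserving, so property \ref{filtprop} applies to both $\Sigma\colon L_1 \to L_2$ and $\bar\Sigma\colon L_2 \to L_1$ with $\chi = -2g(\Sigma)$. Chaining the two resulting inequalities sandwiches $\sum_{s \geq a} d_{L_2}(h,s)$ between $\sum_{s \geq a + 2g} d_{L_1}(h,s)$ and $\sum_{s \geq a - 2g} d_{L_1}(h,s)$ for all $a$, and tracking extremal $s$-values in each support then yields
\[
W(L_2, h) \leq W(L_1, h) + 4g(\Sigma).
\]

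Next I upper-bound $W(L_1, 0)$ using thinness and lower-bound $W(L_2, 0)$ using the split structure. For the upper bound, $\Kh^{h, s}(L_1)$ is supported on two diagonals $s - 2h = c \pm 1$; since the Lee spectral sequence exhibits the $s$-filtered associated graded of $\KhL^\ast(L_1)$ as a subquotient of $\Kh^{\ast, \ast}(L_1)$, the function $d_{L_1}(h, s)$ vanishes unless $s - 2h \in \{c-1, c+1\}$, giving $W(L_1, 0) \leq 2$. For the lower bound, write $L_2 = L_2^{(1)} \sqcup \cdots \sqcup L_2^{(m)}$ and iterate property \ref{tensor} to express $d_{L_2}$ as the convolution of the $d_{L_2^{(j)}}$. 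The Beliakova--Wehrli invariant discussed after Theorem \ref{mainthm2} guarantees that $d_{L_2^{(j)}}(0, \cdot)$ is nonzero at both $s(L_2^{(j)}, \mathfrak o^{(j)}) - 1$ and $s(L_2^{(j)}, \mathfrak o^{(j)}) + 1$, so each factor has width at least $2$ at $h = 0$. The slice $d_{L_2}(0, \cdot)$ dominates the convolution of the individual $h = 0$ slices (obtained by restricting all partial heights $h_j$ in the $\mathbb Z^2$-convolution to $0$), whose extreme points are the sums of the individual extreme points; this gives $W(L_2, 0) \geq 2m$.

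Combining the three estimates yields $2m \leq W(L_2, 0) \leq W(L_1, 0) + 4g(\Sigma) \leq 2 + 4g(\Sigma)$, so $g(\Sigma) \geq \lfloor m/2 \rfloor$. The ``in particular'' statement about quasi-alternating links follows immediately by taking $g(\Sigma) = 0$ with $m \geq 2$. The main obstacle I anticipate is the careful bookkeeping in the first paragraph: one must verify not only that the reverse cobordism is component-preserving but also that total masses of $d_{L_1}(h, \cdot)$ and $d_{L_2}(h, \cdot)$ agree at each $h$, so that the sandwich pins down both endpoints of the support. Once that is set up, the thinness bound and the convolution lower bound are essentially mechanical.
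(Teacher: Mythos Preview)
Your proof is correct and follows essentially the same strategy as the paper's proof (Proposition \ref{mainresult}): bound the $s$-support width at $h=0$ from above for the $\Kh$-thin link via the spectral sequence, from below for the split link via the tensor formula, and compare using the filtered maps in both directions. The only cosmetic difference is packaging: you work with $d_L$ and property \ref{filtprop} of Theorem \ref{mainthm2}, while the paper works directly with $\KhL^0$ and the isomorphism of Lemma \ref{isomlemma}, but the content is identical. Incidentally, the ``obstacle'' you flag about equal total masses at each $h$ is automatic: applying property \ref{filtprop} in both directions with $a$ sufficiently negative already gives $\sum_s d_{L_1}(h,s) = \sum_s d_{L_2}(h,s)$ (or just invoke property \ref{supportmod}, since a component-preserving cobordism induces a bijection of orientations preserving linking numbers).
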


It is known (via properties of the Alexander module) that alternating links are not concordant to split links \cite{alternation}.  
It would be interesting to try to prove Corollary \ref{maincor} (say, restricted to quasi-alternating links) using the Alexander module.\footnote{We recently learned of a preprint \cite{powell} by Stefan Friedl and Mark Powell which apparently presents such a proof.}

This corollary is interesting because the $s$-invariant for alternating \emph{knots} is equal to the knot signature, and thus 
gives no new information (the inequality $g_4^{\operatorname{top}}(K)\geq\frac 12\left|\sigma(K)\right|$ is classical, see 
Murasugi \cite[p416, Theorem 9.1]{signaturemurasugi}).  It is interesting to note that Khovanov homology has a 
reputation for being easy to compute (at least, compared to gauge theoretic invariants which give results similar to 
the Milnor conjecture), but hard to use to prove general theorems, since its structure in general is still poorly understood.  
Thus the above corollary is interesting in that it is a \emph{general} statement which doesn't intrinsically involve 
Khovanov homology (at least, if one restricts to quasi-alternating links).

There have recently been efforts (see Freedman, Gompf, Morrison, and Walker \cite{spc4calc}) to prove that some 
specific proposed counterexamples to the smooth $4$-dimensional Poincar\'e conjecture are in fact exotic by proving 
some specific links are not slice in the standard $\mathbb B^4$ (links which, by virtue of coming from Kirby diagrams for the proposed counterexample, 
are by definition slice in the proposed exotic $\mathbb B^4$).  By slice, we mean \emph{strongly slice}, i.e.\ bounding a 
disjoint union of disks in $\mathbb B^4$.  Since these are usually multi-component links, it may be 
helpful to compute the entire filtration on $\KhL^\ast(L)$: for a link with many components, this \emph{a priori} may 
be a much stronger invariant than the set of $s$-invariant values for some associated knots which are implied to be slice if the link is slice 
(computing these $s$-invariant values was the strategy employed in \cite{spc4calc}).  
We should, however, also note that, in accordance with the growing relations between Khovanov homology and gauge theory, 
some would conjecture that the $s$-filtration should be invariant under concordance of links in any \emph{homotopy} 
$\mathbb R^3\times[0,1]$, and thus would not imply in any straightforward manner that any homotopy $\mathbb B^4$ is exotic.

One thinks that an invariant of links similar to $d_L$ could be defined using the Link Floer Homology of Ozsvath--Szab\'o 
\cite{oszhfk,oszhfl} as an appropriate generalization of the $\tau$-invariant.  One would expect this invariant to satisfy similar properties as the 
$s$-filtration on $\KhL^\ast(L)$.  It is perhaps interesting to note that the vector space $\mathbb O(L)$ appears 
in the Link Floer Homology theory in the guise of $\wedge^\ast H_1(\#^{\left|L\right|}\mathbb S^1\times\mathbb S^2)$ 
(once we take the union of our link with the unknot).

\subsection{Acknowledgments}

This paper represents part of the author's Senior Thesis at Princeton University, 
advised by Zolt\'an Szab\'o.  I thank him for lots of generous time spent meeting and 
discussing mathematics.  John Baldwin, the second reader for my thesis, also made some 
useful comments.  I thank the referee for a very close reading of this paper and 
for the many resulting corrections and clarifications, as well as contributions 
concerning the examples in Section \ref{examples}.

\section{Khovanov--Lee homology}

In this section, we give a quick review of Lee's deformation \cite{lee} of Khovanov homology \cite{khovanov} 
aimed at our intended application.  For a good introduction to Khovanov homology, see Bar-Natan's articles 
\cite{barnatanexpo} and \cite{barnatan}.  The maps for cobordisms were first proved consistent by Jacobsson \cite{jacob}.

To be completely explicit, we define Khovanov--Lee homology via Khovanov's chain complex using the following Frobenius algebra $V$:
\begin{align}
V&=\mathbb Q\mathbf v_-\oplus\mathbb Q\mathbf v_+&\iota(1)&=\mathbf v_+\cr
\epsilon(\mathbf v_+)&=0&m(\mathbf v_-\otimes\mathbf v_-)&=a\mathbf v_+\cr
\epsilon(\mathbf v_-)&=1&m(\mathbf v_-\otimes\mathbf v_+)&=\mathbf v_-\cr
\Delta(\mathbf v_+)&=\mathbf v_-\otimes\mathbf v_++\mathbf v_-\otimes\mathbf v_+&m(\mathbf v_+\otimes\mathbf v_-)&=\mathbf v_-\cr
\Delta(\mathbf v_-)&=\mathbf v_-\otimes\mathbf v_-+a\mathbf v_+\otimes\mathbf v_+&m(\mathbf v_+\otimes\mathbf v_+)&=\mathbf v_+
\end{align}
Setting $a=0$ yields Khovanov homology ($a$ is the Lee deformation parameter).  If $a\ne 0$, then $(m,\iota,\Delta,\epsilon)$ 
admit simple descriptions in terms of the basis $\mathbf x_\pm=\mathbf v_-\pm\sqrt a\mathbf v_+$, and this implies that the 
resulting homology is essentially isomorphic to Lee homology.  The only difference between different values of $a\ne 0$ is 
that the maps associated to a cobordism $\Sigma$ carry a factor of $(2\sqrt a)^{-\chi(\Sigma)/2}$.  This makes Lee's original 
choice of $a=1$ slightly inconvenient, so for the remainder of the paper we set $a=\frac 14$ (as suggested by Walker \cite{kwalker}).

\begin{theorem}\label{khleedef}
For every oriented link $L$, there is an associated $\mathbb Z$-graded vector space $\KhL^\ast(L)$ over $\mathbb Q$ (the grading $\ast$ 
is called the \emph{homological} grading).  Furthermore, each $\KhL^h(L)$ carries a descending filtration, called the $s$-filtration.  
Every oriented cobordism $\Sigma\subseteq\mathbb R^3\times[0,1]$ from $L_1$ to $L_2$ induces a homomorphism $F_\Sigma:\KhL^\ast(L_1)\to\KhL^\ast(L_2)$ 
(defined up to $\pm 1$) which respects the homological grading, and which is filtered of degree $\chi(\Sigma)$.  We have the following additional properties:
\begin{enumerate}
\item\label{smodfour}$\KhL^h(L)$ carries an absolute $\mathbb Z/4\mathbb Z$ grading which is supported in gradings $\equiv\left|L\right|\mod 2$, 
and these two pieces have equal dimensions.  The $s$-filtration breaks up as a filtration on each of the pieces, and 
the $s$-filtration on the degree $k\in\mathbb Z/4\mathbb Z$ piece is supported on integers $s\equiv k\mod 4$.
\item\label{tensorproperty}$\KhL^\ast(L_1\sqcup L_2)=\KhL^\ast(L_1)\otimes\KhL^\ast(L_2)$ (naturally), and this is an isomorphism of the homological grading 
and the $s$-filtration.
\item$\KhL^\ast(\bar L)$ is naturally the dual of $\KhL^\ast(L)$.
\item(due to Lee \cite{lee}) $\dim\KhL^\ast(L)=2^{\left|L\right|}$.  In fact, $\dim\KhL^h(L)$ is the number of 
orientations $\mathfrak o$ of $L$ such that $\lk(\mathfrak o)-\lk(\mathfrak o_1)=-h$, where $\mathfrak o_1$ is the given 
orientation of $L$, and $\lk(\mathfrak o)=\sum_{i<j}\lk(L_i^\mathfrak o,L_j^\mathfrak o)$ (sum over the components of $L$).
\item$\KhL^\ast$ is a functor from the appropriately defined category of links and cobordisms (see \cite{fixingfunctoriality}).
\item$\KhL^{\ast+\lk(\mathfrak o)}(L,\mathfrak o)_{(-3\lk(\mathfrak o))}$ is independent of orientation $\mathfrak o$ 
(where $_{(q_0)}$ means an upwards shift of the $s$-filtration by $q_0$).
\end{enumerate}
\end{theorem}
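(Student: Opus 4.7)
The plan is to construct $\KhL^\ast$ via Khovanov's cube of resolutions using the Frobenius algebra $V$ defined above, and to verify each of the six listed properties in turn. The chain complex $C^\ast(D)$ associated to a diagram $D$, its invariance (up to quasi-isomorphism) under Reidemeister moves, and the chain maps associated to cobordisms all proceed as in Khovanov \cite{khovanov} and Lee \cite{lee}; the choice $a=\frac14$ is arranged so that the normalization factor $(2\sqrt a)^{-\chi(\Sigma)/2}$ equals $1$. Properties (1)--(3) are structural observations on the complex, properties (4) and (5) are deep theorems to be imported, and property (6) is a direct grading computation.

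For (1), I would assign quantum weights $\deg\mathbf v_\pm=\pm 1$ and $\deg a=-4$, under which every structure map of $V$ is either quantum-homogeneous or shifts the quantum grading by a multiple of $4$. Hence the quantum grading descends to a $\mathbb Z/4\mathbb Z$-grading on $\KhL^\ast$ compatible with the $s$-filtration, and the standard parity count on the resolution cube forces support in $\equiv|L|\bmod 2$; equidimensionality of the two parities follows from the involution $\mathbf v_\pm\leftrightarrow\mathbf v_\mp$ on $V$. Property (2) is immediate from the tensor-product factorization of the complex of a split diagram, and (3) follows by identifying the Khovanov complex of the mirror with the linear dual.

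For (4), I would invoke Lee's theorem: in the basis $\mathbf x_\pm=\mathbf v_-\pm\sqrt a\,\mathbf v_+$ the algebra $V$ becomes a product of two copies of $\mathbb Q$, so each orientation $\mathfrak o$ of $L$ determines an explicit canonical cycle $s_\mathfrak o\in\KhL^\ast(L)$ of homological degree $\lk(\mathfrak o_1)-\lk(\mathfrak o)$, and the $2^{|L|}$ cycles $\{s_\mathfrak o\}$ span the homology. Property (5) is Jacobsson's functoriality theorem \cite{jacob}, refined in \cite{fixingfunctoriality}; cobordism maps are built from Morse decompositions and shown, with appropriate sign corrections, to be invariant (up to $\pm 1$) under the movie moves. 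This sign-refined functoriality is the technically hardest input but is fully established in the cited literature, so no new work is required from me here.

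The main original verification is (6). The underlying complex $C^\ast(D)$ depends only on the unoriented diagram; the orientation $\mathfrak o$ enters solely through the homological shift $h\mapsto h-n_-(\mathfrak o)$ and the quantum shift $q\mapsto q+n_+(\mathfrak o)-2n_-(\mathfrak o)$. Reversing the orientation on a subset $S$ of components flips the signs of exactly the mixed crossings between $S$ and its complement, and since $\lk$ is half the sum of signed crossings between pairs of components, the writhe changes by $w(\mathfrak o')-w(\mathfrak o)=2(\lk(\mathfrak o')-\lk(\mathfrak o))$. Since $n_++n_-$ is orientation-independent, this gives $\Delta n_+=\Delta\lk$ and $\Delta n_-=-\Delta\lk$, and substituting into the grading shifts yields $(h,s)\mapsto(h+\Delta\lk,\,s+3\Delta\lk)$, which is precisely the content of (6). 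The only care required is in tracking sign conventions consistently throughout, and no serious obstacle arises.
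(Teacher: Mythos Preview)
The paper does not give a proof of this theorem at all: it is stated as a summary of known structural facts about $\KhL^\ast$, with attributions to Lee, Rasmussen, Jacobsson, and Clark--Morrison--Walker, and is then used as a black box to deduce Theorem~\ref{mainthm2}. So there is no ``paper's own proof'' to compare against; what you have written is a reasonable reconstruction of how one would assemble these facts from the literature, and your approach of importing (4) and (5) wholesale while verifying (1)--(3) and (6) directly is exactly the right division of labour.

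Your grading computation for (6) is correct and carefully done. One genuine slip, however, is in your argument for the equidimensionality of the two $\mathbb Z/4\mathbb Z$-graded pieces in (1). The swap $\mathbf v_+\leftrightarrow\mathbf v_-$ is \emph{not} a morphism of the Frobenius algebra (for instance $m(\mathbf v_+\otimes\mathbf v_+)=\mathbf v_+$ but $m(\mathbf v_-\otimes\mathbf v_-)=a\mathbf v_+\neq\mathbf v_-$), so it does not induce a chain map on the cube and cannot be used to pair up the two pieces. The clean way to see equidimensionality is to first invoke (4): Lee's canonical generators $\{s_{\mathfrak o}\}$ give a basis of $\KhL^h(L)$, orientation reversal $\mathfrak o\mapsto\bar{\mathfrak o}$ is a fixed-point-free involution on this basis, and the two $\mathbb Z/4\mathbb Z$ pieces are exactly the $\pm 1$ eigenspaces of the induced involution (compare the paper's Definition of the mod~$4$ grading on $\mathbb O(X)$ in Section~\ref{orientationgroupsec}). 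This is a small repair, but as written your argument for that clause does not go through.
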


\begin{remark}
Clark, Morrison, and Walker \cite{fixingfunctoriality} and Caprau \cite{caprau} have shown how to define Khovanov--Lee homology 
(with indeterminate $a$) so that the maps associated to cobordisms no longer have a sign ambiguity.  This requires adjoining $i=\sqrt{-1}$ to 
the coefficient ring.
\end{remark}

\begin{definition}
\definitionofinvariant
\end{definition}

Theorem \ref{mainthm2} (the basic properties of $d_L$) follows directly from the basic properties of $\KhL^\ast$ listed 
in Theorem \ref{khleedef}.

\section{Applications to link concordance}\label{applications}

\begin{definition}
A cobordism $\Sigma$ between two links $L_1$ and $L_2$ is said to be \emph{component-preserving} iff $H_0(L_1)\xrightarrow\sim H_0(\Sigma)\xleftarrow\sim H_0(L_2)$.  Note that a component-preserving orientable cobordism of genus $0$ is exactly a link concordance.
\end{definition}

\begin{remark}
One is perhaps also interested in relaxing the restrictive notion of \emph{component}-preserving cobordism to \emph{color}-preserving 
cobordism, where multiple components of the link could have the same color.  Now certainly this case is also easily handled 
using the invariant $\KhL^\ast(L)$.  The necessary data is a coloring of the link, and a choice of relative orientation on 
each colored component (by relative orientation, we mean an orientation up to overall reversal).  Then the relevant invariant is 
just the restriction of the $s$-filtration to the subspace of $\KhL^\ast(L)$ 
generated by all orientations agreeing with the given relative orientations on each colored component.
\end{remark}

\begin{lemma}\label{isomlemma}
The map $F_\Sigma:\KhL^\ast(L_1)\to\KhL^\ast(L_2)$ induced by a component-preserving orientable cobordism is an isomorphism of vector spaces.
\end{lemma}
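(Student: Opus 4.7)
The plan is to prove that $F_\Sigma$ sends Lee's canonical orientation-indexed basis of $\KhL^\ast(L_1)$ bijectively to nonzero scalar multiples of the canonical basis of $\KhL^\ast(L_2)$. Dimension is no obstacle: the component-preserving hypothesis gives $|L_1|=|L_2|$, and Theorem \ref{khleedef}(4) then yields $\dim\KhL^\ast(L_1)=2^{|L_1|}=2^{|L_2|}=\dim\KhL^\ast(L_2)$, so it suffices to prove injectivity.

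First I would set up the orientation bijection. Because $\Sigma$ is orientable and $H_0(L_i)\xrightarrow{\sim}H_0(\Sigma)$, each connected component of $\Sigma$ meets $L_1$ in exactly one knot component and $L_2$ in exactly one knot component; an orientation of either boundary knot determines a unique orientation of that component of $\Sigma$, hence of the other boundary. So orientations $\mathfrak{o}_1$ of $L_1$ are in canonical bijection with orientations $\mathfrak{o}_2$ of $L_2$ via extension through $\Sigma$.

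The central claim is that $F_\Sigma(s_{\mathfrak{o}_1})=c\cdot s_{\mathfrak{o}_2}$ for some nonzero $c\in\mathbb{Q}$, where $s_{\mathfrak{o}}$ denotes Lee's canonical generator for orientation $\mathfrak{o}$. To establish this, decompose $\Sigma$ into elementary Morse pieces (births, saddles, deaths) together with link isotopies. In the basis $\mathbf{x}_\pm=\mathbf{v}_-\pm\tfrac{1}{2}\mathbf{v}_+$ the Frobenius operations become diagonal: $m(\mathbf{x}_\pm\otimes\mathbf{x}_\pm)=\mathbf{x}_\pm$, $m(\mathbf{x}_\pm\otimes\mathbf{x}_\mp)=0$, $\Delta(\mathbf{x}_\pm)=\mathbf{x}_\pm\otimes\mathbf{x}_\pm$, while $\iota(1)=\mathbf{x}_+-\mathbf{x}_-$ and $\epsilon(\mathbf{x}_\pm)=1$. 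In this basis Lee's canonical generator $s_{\mathfrak{o}}$ is a pure tensor of $\mathbf{x}_\pm$-labels on the Seifert circles of $\mathfrak{o}$, so tracking the image of $s_{\mathfrak{o}_1}$ through each elementary piece is explicit. The claim reduces to checking that no saddle ever attempts to merge two circles carrying opposite $\mathbf{x}_\pm$-labels (which would kill the vector) and that no birth–death pair contributes $\epsilon\iota(1)=0$ in a way that is not absorbed by the rest of the cobordism.

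The main obstacle is precisely this combinatorial compatibility check. Here one uses that the global orientation of $\Sigma$ restricts consistently to each connected component, so two Seifert circles meeting at a saddle inside one component of $\Sigma$ carry the same induced $\mathbf{x}_\pm$-label, and any newborn unknot appearing in a handle decomposition sits inside a single component of $\Sigma$ and likewise inherits a label compatible with the saddle that eventually absorbs it. Once this compatibility is verified, $F_\Sigma$ is diagonal in the canonical basis with nonzero diagonal entries, and so is an isomorphism.
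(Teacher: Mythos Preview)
Your argument is correct and is exactly the content of Rasmussen's Proposition~4.1, which is what the paper cites as its entire proof; you have simply unpacked that citation by tracking Lee's canonical generators through a handle decomposition and using that the component-preserving hypothesis forces a bijection of orientations (and excludes closed components).  One cosmetic correction: with $a=\tfrac14$ one has $m(\mathbf x_-\otimes\mathbf x_-)=-\mathbf x_-$ rather than $+\mathbf x_-$, but since only nonvanishing of the diagonal entries matters, this does not affect your argument.
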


\begin{proof}
This follows from Rasmussen \cite[p434 Proposition 4.1]{rasmussen}.
\end{proof}

\begin{figure}
\centering\includegraphics{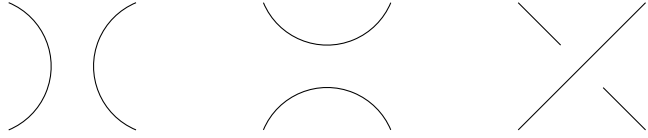}
\caption{Local pictures of $L_0$, $L_1$, $L_\infty$.}\label{L01inftyfig}
\end{figure}

\begin{definition}
A link $L$ is said to be $\Kh$-thin iff $\Kh^\ast(L)$ is supported on exactly two diagonals of the form $q=q_0+2h\pm 1$ ($h=\ast$ is 
the homological grading).
\end{definition}

The spectral sequence from $\Kh^\ast(L)$ to $\KhL^\ast(L)$ implies that for a $\Kh$-thin link, 
the support of $d_L$ is contained in the same two diagonals $s=q_0+2h\pm 1$.

\begin{definition}\label{quasialterdef}
Oszv\'ath and Szab\'o \cite{oszquasialter} define the set of \emph{quasi-alternating} links to be the set of links generated by the unknot 
using the following skein operation: if $L_0$ and $L_1$ are quasi-alternating and $\det L_\infty=\det L_0+\det L_1$, then 
$L_\infty$ is also quasi-alternating (where $L_0,L_1,L_\infty$ are given as in Figure \ref{L01inftyfig}).
\end{definition}

It is standard that all non-split alternating links are quasi-alternating.  
Quasi-alternating links are known to be both $\Kh$-thin and $\widehat{\HFK}$-thin by Manolescu--Ozsv{\'a}th \cite{quasithin}, 
though Greene \cite{greene} has shown that there are non-quasi-alternating links that are both $\Kh$-thin and $\widehat{\HFK}$-thin.

\begin{proposition}[Corollary \ref{maincor}]\label{mainresult}
Let $L$ be a $\Kh$-thin link, and suppose $\Sigma$ is a component-preserving orientable cobordism between $L$ 
and $M=M_1\sqcup\cdots\sqcup M_k$.  Then $g(\Sigma)\geq\lfloor\frac k2\rfloor$.
\end{proposition}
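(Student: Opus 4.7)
The plan is to use the filtration constraint from Theorem \ref{mainthm2}(\ref{filtprop}), applied to both $\Sigma$ and its reverse, to squeeze the two-point support of $d_L(0,\cdot)$ close to the wide $(k+1)$-point support of $d_M(0,\cdot)$; a ``width comparison'' of these two supports then forces $g(\Sigma)\ge\lfloor k/2\rfloor$.

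First I would reduce everything to homological grading zero. Since $\Sigma$ is component-preserving, $L$ has exactly $k$ components and $\Sigma$ decomposes into $k$ connected twice-bordered orientable surfaces, giving $\chi(\Sigma)=-2g(\Sigma)$. By Lemma \ref{isomlemma}, $F_\Sigma$ is an isomorphism respecting the homological grading, so $\dim\KhL^h(L)=\dim\KhL^h(M)$. For the split union of knots $M$ this vanishes for $h\ne 0$ and equals $2^k$ for $h=0$, so both $d_L$ and $d_M$ are concentrated in grading $h=0$ with total mass $2^k$. $\Kh$-thinness then confines $d_L(0,\cdot)$ to the two-point set $\{q_0-1,\,q_0+1\}$, while Theorem \ref{mainthm2}(\ref{tensor}) together with the knot formula $d_{M_i}(0,s(M_i)\pm 1)=1$ shows that $d_M(0,\cdot)=d_{M_1}\ast\cdots\ast d_{M_k}$ is supported on $\{s_0+k-2j:0\le j\le k\}$ (with $s_0=\sum_i s(M_i)$), with extremes $s_0\pm k$ each of multiplicity $1$.

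The core step is to apply Theorem \ref{mainthm2}(\ref{filtprop}) to $\Sigma:L\to M$ and its reverse $\bar\Sigma:M\to L$ (both component-preserving, both with Euler characteristic $-2g(\Sigma)$), obtaining for every $a\in\mathbb Z$
\begin{equation*}
\sum_{s\ge a} d_L(0,s)\le\sum_{s\ge a-2g(\Sigma)} d_M(0,s), \qquad \sum_{s\ge a} d_M(0,s)\le\sum_{s\ge a-2g(\Sigma)} d_L(0,s).
\end{equation*}
Let $p_1\ge p_2$ be the maximum and minimum of the support of $d_L(0,\cdot)$, so $p_1-p_2\le 2$. Taking $a=s_0+k$ in the second inequality (LHS $=1$) forces $s_0+k-2g(\Sigma)\le p_1$, and taking $a=p_2$ in the first (LHS $=2^k$) forces $p_2-2g(\Sigma)\le s_0-k$. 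Subtracting yields $2k\le(p_1-p_2)+4g(\Sigma)\le 2+4g(\Sigma)$, so $g(\Sigma)\ge(k-1)/2$, and since $g(\Sigma)$ is a nonnegative integer, $g(\Sigma)\ge\lfloor k/2\rfloor$.

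The main subtlety is the reduction to homological grading zero, which is what makes the $\Kh$-thinness hypothesis bite; once that is in place the bound is a routine width comparison. Edge cases where $d_L(0,\cdot)$ is supported on a single diagonal (so $p_1=p_2$) only strengthen the inequality to $g(\Sigma)\ge k/2$, so the overall conclusion is unaffected.
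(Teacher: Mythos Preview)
Your argument is essentially the same as the paper's: compare the diameter of the $s$-support in homological degree $0$ on the two sides, using that the cobordism and its reverse are both filtered of degree $-2g(\Sigma)$. The paper phrases this directly in terms of the filtration on $\KhL^0$; you phrase it via the inequalities of Theorem~\ref{mainthm2}(\ref{filtprop}), which amounts to the same thing.

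There is one genuine gap. You assume that each $M_i$ is a \emph{knot} (``the split union of knots $M$'', ``the knot formula $d_{M_i}(0,s(M_i)\pm 1)=1$''), and you use this both to force everything into homological grading $0$ and to pin down the support of $d_M(0,\cdot)$ exactly. The proposition as stated allows the $M_i$ to be arbitrary sublinks, and the paper's proof covers this case. Your reduction ``$\dim\KhL^h(M)=0$ for $h\ne 0$'' then fails, and the explicit formula for $d_M(0,\cdot)$ is no longer available.

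The fix is minor, and in fact simplifies your write-up. You never needed the global reduction to $h=0$: $\Kh$-thinness already gives $\operatorname{diam}\operatorname{supp} d_L(0,\cdot)\le 2$ directly. On the $M$ side, for any link $M_i$ the support of $d_{M_i}(0,\cdot)$ has diameter $\ge 2$ by Theorem~\ref{mainthm2}(\ref{supportmod}) (the two $\mathbb Z/4$ pieces are nonzero and sit in $s$-values differing by $2$ mod $4$), so the convolution $d_{M_1}(0,\cdot)\ast\cdots\ast d_{M_k}(0,\cdot)$, which contributes to $d_M(0,\cdot)$, already has support of diameter $\ge 2k$. Now run your two inequalities with $a=\max\operatorname{supp} d_M(0,\cdot)$ and $a=\min\operatorname{supp} d_L(0,\cdot)$ (using that $\sum_s d_L(0,s)=\sum_s d_M(0,s)$ by Lemma~\ref{isomlemma}, not that either equals $2^k$); subtracting gives $2k\le 2+4g(\Sigma)$ as before.
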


\begin{proof}
Fix an orientation on $L$, which thus orients each $M_i$.

We know (from Theorem \ref{mainthm2} property 
\ref{supportmod}) that the support of the $s$-filtration on $\KhL^0(M_i)$ has diameter at least $2$.  
Thus (by Theorem \ref{mainthm2} property \ref{tensor}) $\KhL^0(M)$ has $s$-filtration of diameter at 
least $2k$.  Since $L$ is $\Kh$-thin, the $s$-filtration on $\KhL^0(L)$ has diameter 
equal to $2$ (using the spectral sequence from $\Kh^\ast$ to $\KhL^\ast$).

The cobordism and its reverse induce two maps:
\begin{equation}
\KhL^0(L)\xrightarrow{F_\Sigma}\KhL^0(M)\xrightarrow{F_{-\Sigma}}\KhL^0(L)
\end{equation}
These are both isomorphisms by Lemma \ref{isomlemma}.  Also, we know that both maps are filtered of degree $-2g(\Sigma)$.

Without loss of generality, suppose the $s$-filtration on $\KhL^0(L)$ is supported in degrees $\pm 1$.  Then since the isomorphism 
$\KhL^0(M)\xrightarrow{F_{-\Sigma}}\KhL^0(L)$ is filtered of degree $-2g(\Sigma)$, 
the $s$-filtration on $\KhL^0(M)$ must be supported in degrees $\leq 1+2g(\Sigma)$.  Similarly, looking at $\KhL^0(L)\xrightarrow{F_\Sigma}\KhL^0(M)$, 
we see that the $s$-filtration on $\KhL^0(M)$ must be supported in degrees $\geq -1-2g(\Sigma)$.  Thus we have 
$2+4g(\Sigma)\geq 2k$, so $g(\Sigma)\geq\lceil\frac{k-1}2\rceil=\lfloor\frac k2\rfloor$.
\end{proof}

The following corollary to Proposition \ref{mainresult} is already known via properties of the Alexander module \cite{alternation}.

\begin{corollary}
No non-split alternating link is concordant to a split link.
\end{corollary}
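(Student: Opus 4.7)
The plan is to reduce this directly to Proposition \ref{mainresult} (the $\Kh$-thin genus bound). The bridge is the fact, cited in the paragraph following Definition \ref{quasialterdef}, that every non-split alternating link is quasi-alternating, and that Manolescu--Ozsv\'ath \cite{quasithin} showed quasi-alternating links are $\Kh$-thin. So I begin by observing that a non-split alternating link $L$ satisfies the hypothesis ``$\Kh$-thin'' of Proposition \ref{mainresult}.

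Next I unpack what a concordance is in our setup. A link concordance is, by definition, a disjoint union of annuli in $\mathbb R^3\times[0,1]$ joining the components of $L$ to those of another link $M$; in the terminology established above, this is exactly a component-preserving orientable cobordism $\Sigma$ of genus zero (as the remark after the definition of component-preserving cobordism makes explicit). In particular, such a concordance forces $|L|=|M|$.

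Now suppose for contradiction that $L$ is concordant to a split link $M=M_1\sqcup\cdots\sqcup M_k$. Since ``split'' means the link separates as a nontrivial disjoint union, we have $k\geq 2$ (if $|L|=1$ there is nothing to prove, because a one-component link is never split). Apply Proposition \ref{mainresult} to this concordance $\Sigma$: it gives
\[
0=g(\Sigma)\geq\left\lfloor\frac k2\right\rfloor\geq 1,
\]
which is the desired contradiction.

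The only real content beyond Proposition \ref{mainresult} is the two invocations just described: $\Kh$-thinness of non-split alternating links (Manolescu--Ozsv\'ath) and the identification of concordance with a genus-zero component-preserving orientable cobordism. There is no obstacle to overcome; the corollary is essentially a direct specialization, which is why the authors present it as a one-line consequence.
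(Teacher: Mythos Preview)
Your proof is correct and matches the paper's intended argument: the paper simply labels this a corollary to Proposition~\ref{mainresult} without further proof, relying on exactly the two facts you invoke (non-split alternating $\Rightarrow$ quasi-alternating $\Rightarrow$ $\Kh$-thin, and concordance $=$ genus-zero component-preserving orientable cobordism). There is nothing to add.
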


\section{The orientation group}\label{orientationgroupsec}

In this section we define a (almost tautological) $(1+1)$-dimensional (projective) TQFT which we call the \emph{orientation group}.  It 
is isomorphic to the TQFT used to define Lee homology (with Lee deformation parameter $a=\frac 14$).  In fact, Kevin Walker 
\cite{kwalker} informs us that the orientation group is isomorphic to $\KhL^\ast$ as a functor.  
The goal of the construction in this section is to give a natural intrinsic description of the maps associated to cobordisms.

For any manifold $X$, we let $\left|X\right|$ denote the number of connected components of $X$.

\begin{definition}
For an orientable manifold $X$, let $O(X)$ denote the set of orientations of $X$.  
Let $\mathbb O(X)$ denote the $\mathbb Q$-vector space with basis indexed by $O(X)$.  
We also define a natural inner product $\langle\cdot,\cdot\rangle$ on $\mathbb O(X)$ by declaring that this basis be orthonormal.
\end{definition}

\begin{definition}
Let $\mathfrak o\mapsto\bar{\mathfrak o}$ denote reversal of orientation; this is an involution of $O(X)$ and of $\mathbb O(X)$.
\end{definition}

By a \emph{relative orientation} on a manifold $X$, we mean an orientation up to overall reversal of orientation, that is, an element of $O(X)/(\mathfrak o\mapsto\bar{\mathfrak o})$ (which we often think of as a pair $(\mathfrak o,\bar{\mathfrak o})$).

\begin{definition}
We define a mod $4$ grading on $\mathbb O(X)$ by declaring that the $+1$ eigenspace of 
$\mathfrak o\mapsto\bar{\mathfrak o}$ have grading $-\left|X\right|$ and that 
the $-1$ eigenspace of $\mathfrak o\mapsto\bar{\mathfrak o}$ have grading $2-\left|X\right|$.
\end{definition}

\begin{lemma}
We have a natural isomorphism $\mathbb O(X_1\cup X_2)=\mathbb O(X_1)\otimes\mathbb O(X_2)$ which respects the involution $\mathfrak o\mapsto\bar{\mathfrak o}$ as well as the mod $4$ grading.
\end{lemma}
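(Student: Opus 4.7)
The plan is to exhibit the isomorphism on the bases and then verify the two compatibilities (involution and mod $4$ grading) case by case.

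First, I would observe that, since $X_1 \cup X_2$ is a disjoint union, an orientation of $X_1 \cup X_2$ is the same data as a pair consisting of an orientation of $X_1$ and an orientation of $X_2$. This gives a canonical bijection $O(X_1 \cup X_2) \xrightarrow{\sim} O(X_1) \times O(X_2)$ sending $\mathfrak o$ to $(\mathfrak o|_{X_1}, \mathfrak o|_{X_2})$. Extending $\mathbb Q$-linearly on the indexing bases then yields a canonical isomorphism $\Phi : \mathbb O(X_1 \cup X_2) \xrightarrow{\sim} \mathbb O(X_1) \otimes \mathbb O(X_2)$, and this $\Phi$ is manifestly natural in $X_1, X_2$ (any pair of orientation-preserving diffeomorphisms $X_i \to X_i'$ acts by permutation on the orientations, and $\Phi$ intertwines these permutations).

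Next I would check compatibility with the involution. On a basis element $\mathfrak o = \mathfrak o_1 \sqcup \mathfrak o_2$, we have $\bar{\mathfrak o} = \bar{\mathfrak o}_1 \sqcup \bar{\mathfrak o}_2$, so $\Phi(\bar{\mathfrak o}) = \bar{\mathfrak o}_1 \otimes \bar{\mathfrak o}_2 = (\,\bar{\cdot}\, \otimes \bar{\cdot}\,)\Phi(\mathfrak o)$. Thus $\Phi$ intertwines the involution on $\mathbb O(X_1 \cup X_2)$ with the tensor product of the two involutions on the factors, as required.

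For the mod $4$ grading, I would decompose each $\mathbb O(X_i) = V_i^+ \oplus V_i^-$ into $\pm 1$ eigenspaces of the involution; by definition $V_i^+$ lives in grading $-|X_i|$ and $V_i^-$ in grading $2 - |X_i|$. Under $\Phi$, the eigenspace decomposition of $\mathbb O(X_1 \cup X_2)$ matches
\begin{equation*}
(V_1^+ \otimes V_2^+) \oplus (V_1^- \otimes V_2^-) \;\;\text{(the $+1$ eigenspace)} \quad\oplus\quad (V_1^+ \otimes V_2^-) \oplus (V_1^- \otimes V_2^+) \;\;\text{(the $-1$ eigenspace)}.
\end{equation*}
By the definition of the grading on $\mathbb O(X_1 \cup X_2)$, the $+1$ eigenspace should sit in grading $-|X_1| - |X_2|$ and the $-1$ eigenspace in grading $2 - |X_1| - |X_2|$. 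One then checks the four summands: $V_1^+ \otimes V_2^+$ has total grading $-|X_1| - |X_2|$; $V_1^- \otimes V_2^-$ has total grading $(2-|X_1|) + (2-|X_2|) = 4 - |X_1| - |X_2| \equiv -|X_1| - |X_2| \pmod 4$; and the two mixed terms each have total grading $2 - |X_1| - |X_2|$. All four agree with the prescription, so $\Phi$ is a graded isomorphism.

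The proof is essentially bookkeeping; the only mildly nontrivial point is the identification $4 \equiv 0 \pmod 4$ in the $V_1^- \otimes V_2^-$ summand, which is precisely why the grading was taken mod $4$ (rather than $\mathbb Z$) in the first place. No single step is a serious obstacle.
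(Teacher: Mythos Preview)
Your proof is correct and follows the same approach as the paper: establish the bijection $O(X_1\cup X_2)\cong O(X_1)\times O(X_2)$, extend linearly, and then deduce the compatibilities with the involution and the mod~$4$ grading. The paper's own argument is terser (it simply asserts that the grading claim follows ``by examining the definition of the mod $4$ grading in terms of the map $\mathfrak o\mapsto\bar{\mathfrak o}$''), whereas you have written out the eigenspace decomposition and the four-case check explicitly; but the content is identical.
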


\begin{proof}
Clearly $O(X_1\cup X_2)=O(X_1)\times O(X_2)$, and this gives us the desired isomorphism of vector spaces, which clearly respects 
reversal of orientation.  Now by examining the definition of the mod $4$ grading in terms of the map $\mathfrak o\mapsto\bar{\mathfrak o}$, one 
easily sees that this implies that the mod $4$ grading is preserved as well.
\end{proof}

Henceforth we shall only be interested in $\mathbb O(X)$ in the case that $X$ is a $1$-manifold.

\begin{definition}\label{functdef}
If $A$ is an orientable cobordism between $X$ and $Y$, then we define a map $F_A:\mathbb O(X)\to\mathbb O(Y)$ (up to overall multiplication 
by $\pm 1$) as follows.  Let $\sigma_A:O(A)\to\{\pm 1\}$ satisfy the property that reversing the orientation on some component 
$A_1\subseteq A$ multiplies the value of $\sigma_A$ by $(-1)^{(\chi(A_1)-\left|A_1\cap X\right|+\left|A_1\cap Y\right|)/2}$ (note that 
since $A$ is orientable, $\chi(A_1)-\left|A_1\cap X\right|+\left|A_1\cap Y\right|\equiv\chi($closed surface$)\equiv 0\mod 2$).  
Clearly there are two such functions $\sigma_A$, differing by a sign.  Then we define (up to $\pm 1$):
\begin{equation}\label{functdefeqn}
F_A(\alpha):=\sum_{\mathfrak o\in O(A)}\sigma_A(\mathfrak o)\langle\alpha,\left.\mathfrak o\right|_X\rangle\left.\mathfrak o\right|_Y
\end{equation}
\end{definition}

By definition, orientations of $X$ which do not extend to $A$ get annihilated by $F_A$.  More generally, an orientation is sent 
to a linear combination of those orientations on $Y$ which are compatible with the cobordism $A$ and the input orientation of $X$.  
Rasmussen \cite[p434 Proposition 4.1]{rasmussen} showed a similar property of $\KhL^\ast$ in the process of defining the $s$-invariant.

\begin{lemma}\label{functlemma}
The maps associated to cobordisms are functorial in the sense that if $A$ is a cobordism between $X$ and $Y$ and $B$ is 
a cobordism between $Y$ and $Z$, then $F_{A\cup_YB}=F_B\circ F_A$.
\end{lemma}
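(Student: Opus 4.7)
The plan is to compute $F_B \circ F_A$ explicitly from Definition \ref{functdef} and compare it term-by-term with $F_{A\cup_Y B}$. Writing out the composition,
\begin{equation*}
F_B(F_A(\alpha))=\sum_{\mathfrak o_A\in O(A)}\sum_{\mathfrak o_B\in O(B)}\sigma_A(\mathfrak o_A)\sigma_B(\mathfrak o_B)\langle\alpha,\mathfrak o_A|_X\rangle\langle\mathfrak o_A|_Y,\mathfrak o_B|_Y\rangle\,\mathfrak o_B|_Z,
\end{equation*}
the inner product $\langle\mathfrak o_A|_Y,\mathfrak o_B|_Y\rangle$ vanishes unless $\mathfrak o_A$ and $\mathfrak o_B$ restrict to the same orientation on $Y$, in which case they glue to a unique orientation $\mathfrak o\in O(A\cup_Y B)$. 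Thus the double sum reduces to a sum over $O(A\cup_Y B)$, with coefficient $\sigma_A(\mathfrak o|_A)\sigma_B(\mathfrak o|_B)$ in place of $\sigma_{A\cup_Y B}(\mathfrak o)$.

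The whole content of the lemma is therefore to show that the function $\mathfrak o\mapsto \sigma_A(\mathfrak o|_A)\sigma_B(\mathfrak o|_B)$ on $O(A\cup_Y B)$ satisfies the same reversal rule that characterizes $\sigma_{A\cup_Y B}$ (up to a global sign, which is all the definition promises). I would verify this by picking a single component $C$ of $A\cup_Y B$, writing $C=\bigcup_i A_i\cup_Y\bigcup_j B_j$ for its decomposition into components of $A$ and $B$, and reversing the orientation on $C$. The factor picked up is
\begin{equation*}
\prod_i(-1)^{(\chi(A_i)-|A_i\cap X|+|A_i\cap Y|)/2}\cdot\prod_j(-1)^{(\chi(B_j)-|B_j\cap Y|+|B_j\cap Z|)/2}.
\end{equation*}

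The key cancellation is that each component of $Y$ inside $C$ contributes equally to $\sum_i|A_i\cap Y|$ and to $\sum_j|B_j\cap Y|$, so these two terms cancel in the exponent. Since $Y$ is a closed $1$-manifold (a disjoint union of circles), the Mayer--Vietoris formula gives $\chi(C)=\sum_i\chi(A_i)+\sum_j\chi(B_j)$, and the boundary counts combine as $|C\cap X|=\sum_i|A_i\cap X|$ and $|C\cap Z|=\sum_j|B_j\cap Z|$. The exponent therefore collapses to $(\chi(C)-|C\cap X|+|C\cap Z|)/2$, which is exactly the rule demanded of $\sigma_{A\cup_Y B}$.

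The main obstacle, such as it is, will be bookkeeping rather than any conceptual difficulty: one has to make sure the parities all have the right sign convention and that the $Y$-contributions really do cancel (which uses $\chi(Y)=0$ component-wise). Once the reversal rule is matched, both $\sigma_A(\cdot|_A)\sigma_B(\cdot|_B)$ and $\sigma_{A\cup_Y B}$ are functions on $O(A\cup_Y B)$ satisfying the same transformation law, hence agree up to an overall sign on each connected cobordism, which proves $F_B\circ F_A=\pm F_{A\cup_Y B}$ as required.
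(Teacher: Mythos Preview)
Your proof is correct and follows exactly the paper's approach: compute the composition as a double sum, collapse it to a sum over $O(A\cup_Y B)$ via the inner product on $Y$, and then check that $\sigma_A(\mathfrak o|_A)\sigma_B(\mathfrak o|_B)$ obeys the reversal rule defining $\sigma_{A\cup_Y B}$. The paper simply asserts this last verification, whereas you spell it out (correctly using $\chi(Y)=0$ and the cancellation of the $Y$-boundary counts); one small wording quibble is that the two sign functions agree up to a \emph{single} global sign, not one per connected component, but this does not affect the argument.
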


\begin{proof}
\begin{align}\label{functcalc}
F_B(F_A(\alpha))&=\sum_{\mathfrak o_B\in O(B)}\sum_{\mathfrak o_A\in O(A)}\sigma_B(\mathfrak o_B)\sigma_A(\mathfrak o_A)\langle\alpha,\left.\mathfrak o_A\right|_X\rangle\langle\left.\mathfrak o_A\right|_Y,\left.\mathfrak o_B\right|_Y\rangle\left.\mathfrak o_B\right|_Z\cr
&=\sum_{\mathfrak o\in O(A\cup_YB)}\sigma_A(\left.\mathfrak o\right|_A)\sigma_B(\left.\mathfrak o\right|_B)\langle\alpha,\left.\mathfrak o\right|_X\rangle\left.\mathfrak o\right|_Z
\end{align}
Now just observe that the function $O(A\cup_YB)\to\{\pm 1\}$ given by $\sigma_A(\left.\mathfrak o\right|_A)\sigma_B(\left.\mathfrak o\right|_B)$ satisfies the 
property which defines $\sigma_{A\cup_YB}:O(A\cup_YB)\to\{\pm 1\}$ for the construction of $F_{A\cup_YB}$.
\end{proof}

\begin{lemma}
The map $F_A$ on $\mathbb O$ is homogeneous of degree $\chi(A)$ with respect to the mod $4$ grading.
\end{lemma}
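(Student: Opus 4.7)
The plan is to reduce to the case where $A$ is connected and then read off the grading from a direct computation. If $A = A_1 \sqcup A_2$ with $A_i$ a cobordism from $X_i$ to $Y_i$, then \eqref{functdefeqn} together with the obvious factorization $\sigma_A = \sigma_{A_1} \sigma_{A_2}$ (which satisfies the defining property componentwise) yields $F_A = F_{A_1} \otimes F_{A_2}$ under the tensor product identification of the preceding lemma. Since $\chi(A) = \chi(A_1) + \chi(A_2)$ and the degree of a tensor product of mod $4$ graded maps is the sum of the degrees, the problem reduces to the case where $A$ is connected.

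Assume now that $A$ is connected, and set $p := \left|X\right|$ and $q := \left|Y\right|$; since $\partial A = X \sqcup Y$, the surface $A$ has $p + q$ boundary circles and $\chi(A) = 2 - 2g(A) - p - q$. Let $\mathfrak{o}, \bar{\mathfrak{o}}$ be the two orientations of $A$. From \eqref{functdefeqn}, $F_A$ vanishes on the orthogonal complement of the two-dimensional subspace of $\mathbb{O}(X)$ spanned by $\mathfrak{o}|_X$ and $\bar{\mathfrak{o}}|_X$; and since $\bar{\mathfrak{o}}|_X$ is the overall reversal of $\mathfrak{o}|_X$, the vectors $e_{\pm} := \mathfrak{o}|_X \pm \bar{\mathfrak{o}}|_X$ lie in the $\pm 1$ eigenspaces of $\mathfrak{o} \mapsto \bar{\mathfrak{o}}$ on $\mathbb{O}(X)$ and hence in mod $4$ gradings $-p$ and $2 - p$ respectively.

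Setting $c := \sigma_A(\bar{\mathfrak{o}})/\sigma_A(\mathfrak{o}) = (-1)^{(\chi(A) - p + q)/2}$ (by the defining property of $\sigma_A$ applied to the single component $A$), a direct substitution into \eqref{functdefeqn} yields
\begin{equation*}
F_A(e_{\pm}) = \sigma_A(\mathfrak{o}) \bigl( \mathfrak{o}|_Y \pm c\, \bar{\mathfrak{o}}|_Y \bigr).
\end{equation*}
When $c = +1$, equivalently $\chi(A) \equiv p - q \pmod{4}$, the image $F_A(e_\pm)$ lies in the $\pm 1$ eigenspace on the $Y$ side, with grading $-q$ or $2 - q$ respectively; when $c = -1$, equivalently $\chi(A) \equiv p - q + 2 \pmod{4}$, the map swaps the two eigenspaces. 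In either case a short mod $4$ parity check shows that the grading shift equals $\chi(A) \pmod{4}$. The main obstacle is purely bookkeeping — extracting $c$ correctly from the defining property of $\sigma_A$ and running the parity check in both cases — but the sign in the definition of $\sigma_A$ is engineered precisely so that $\chi(A)$ appears as the grading shift, so there is no deeper conceptual difficulty.
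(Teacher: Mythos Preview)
Your proof is correct. Both your argument and the paper's ultimately hinge on the same sign computation $\sigma_A(\bar{\mathfrak o})/\sigma_A(\mathfrak o)=(-1)^{(\chi(A)-|X|+|Y|)/2}$, but you package it differently. You first reduce to connected $A$ via $F_{A_1\sqcup A_2}=F_{A_1}\otimes F_{A_2}$, then check homogeneity by hand on the basis $e_\pm=\mathfrak o|_X\pm\bar{\mathfrak o}|_X$ of the non-killed two-dimensional subspace. The paper instead works globally (no reduction to connected pieces) by observing that, since the mod $4$ grading is defined via the $\pm 1$ eigenspaces of $\mathfrak o\mapsto\bar{\mathfrak o}$, homogeneity of degree $|X|-|Y|$ (respectively $2+|X|-|Y|$) is equivalent to $\overline{F_A(\bar\alpha)}=F_A(\alpha)$ (respectively $=-F_A(\alpha)$); a one-line substitution into the defining formula then yields $\overline{F_A(\bar\alpha)}=(-1)^{(\chi(A)-|X|+|Y|)/2}F_A(\alpha)$, and the result follows. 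The paper's route is a little slicker---it avoids the tensor reduction and the case split on $c$---while your route is more concrete and makes the mechanism visible on the explicit eigenvectors. One small point you leave implicit: the orthogonal complement on which $F_A$ vanishes is closed under $\mathfrak o\mapsto\bar{\mathfrak o}$ (the remaining basis orientations pair up), hence is itself graded, so vanishing there is compatible with homogeneity of any fixed degree; this is worth a sentence.
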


\begin{proof}
Note that by definition of the mod $4$ grading, we have:
\begin{align}
\overline{F_A(\bar\alpha)}=F_A(\alpha)&\iff F_A\text{ homogeneous of degree }\left|X\right|-\left|Y\right|\cr
\overline{F_A(\bar\alpha)}=-F_A(\alpha)&\iff F_A\text{ homogeneous of degree }2+\left|X\right|-\left|Y\right|
\end{align}
Now we calculate:
\begin{align}
\overline{F_A(\bar\alpha)}&=\sum_{\mathfrak o\in O(A)}\sigma_A(\mathfrak o)\langle\bar\alpha,\left.\mathfrak o\right|_X\rangle\left.\bar{\mathfrak o}\right|_Y\cr
&=\sum_{\mathfrak o\in O(A)}\sigma_A(\bar{\mathfrak o})\langle\alpha,\left.\mathfrak o\right|_X\rangle\left.\mathfrak o\right|_Y
\end{align}
Now by the definition of $\sigma_A$, this equals $(-1)^{(\chi(A)-\left|X\right|+\left|Y\right|)/2}F_A(\alpha)$.  Thus we have:
\begin{align}
\chi(A)-\left|X\right|+\left|Y\right|\equiv 0\mod 4&\implies F_A\text{ homogeneous of degree }\left|X\right|-\left|Y\right|\cr
\chi(A)-\left|X\right|+\left|Y\right|\equiv 2\mod 4&\implies F_A\text{ homogeneous of degree }2+\left|X\right|-\left|Y\right|
\end{align}
which exactly says $F_A$ is homogeneous of degree $\chi(A)$.
\end{proof}

The following description shows the isomorphism with Lee's TQFT (with $a=\frac 14$).

\begin{lemma}\label{functelement}
The map $F_A$ has the following alternative description.  We decompose $A$ into iterated handle additions 
(e.g.\ using a Morse function on $A$), and then to each of the handle additions, we associate maps as follows.

For a $0$-handle, we map $\alpha$ to $\alpha\otimes(\mathfrak o-\bar{\mathfrak o})$, where $\mathfrak o$ is an orientation on the new circle.

For a $1$-handle which splits a component, the map sends every orientation to its extension to the new manifold.

For a $1$-handle which joins two components, the map sends orientations which do not extend to the new manifold to 
zero, and sends orientations which do extend to their natural extension \emph{multiplied by $\pm 1$} depending on 
the orientation of the new merged circle.

For a $2$-handle, the map sends $\mathfrak o\otimes\alpha$ to $\alpha$ and $\bar{\mathfrak o}\otimes\alpha$ to $\alpha$.
\end{lemma}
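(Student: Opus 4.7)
The plan is to combine the functoriality of $F$ (Lemma~\ref{functlemma}) with a direct case-by-case verification for each of the four handle types. Decomposing $A$ into handle attachments expresses $A$ as a composition of elementary cobordisms, each of the form $X\times I$ with a single handle attached on the top. By Lemma~\ref{functlemma}, the map $F_A$ decomposes correspondingly, so it suffices to check the stated formula on each elementary cobordism.

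As a preliminary reduction, I would observe that any trivial cylinder component $A_1$ has $\chi(A_1)=0$ and $|A_1\cap X|=|A_1\cap Y|=1$, so the exponent $(\chi(A_1)-|A_1\cap X|+|A_1\cap Y|)/2$ vanishes. Hence $\sigma_A$ is insensitive to the orientation of such a cylinder, and from \eqref{functdefeqn} the map $F_A$ acts as the identity on the components of $X$ that the handle does not touch. This lets us focus on the single non-cylindrical component $A_1$ of $A$ that contains the handle.

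The core of the argument is then a direct computation in each of the four cases. In each one I would enumerate the orientations of $A_1$ that extend a given input orientation on $X\cap A_1$, record the induced output orientations on $Y\cap A_1$, and read off the relative signs using the reversal formula in Definition~\ref{functdef}. The governing parity $(\chi(A_1)-|A_1\cap X|+|A_1\cap Y|)/2\bmod 2$ is $1$ for the $0$-handle ($A_1\cong D^2$ with $\partial A_1\subseteq Y$) and $1$ for the joining $1$-handle ($A_1$ a pair of pants with two boundaries on $X$), producing respectively the antisymmetrization $\alpha\otimes(\mathfrak{o}-\bar{\mathfrak{o}})$ and the output-orientation-dependent sign on the merged circle; it is $0$ for the splitting $1$-handle (pair of pants with one boundary on $X$) and the $2$-handle ($A_1\cong D^2$ with $\partial A_1\subseteq X$), producing unique extension with no sign and symmetric collapse of the capped circle, respectively. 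Input orientations that are incompatible with any orientation of $A_1$ (which happens only in the $1$-handle joining case) contribute nothing, matching the lemma.

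The main obstacle is the sign bookkeeping, but it is essentially automatic: the exponent formula in Definition~\ref{functdef} is tailored so that precisely these four elementary maps emerge, with the overall $\pm 1$ ambiguity in each case absorbed into the overall $\pm 1$ ambiguity in $\sigma_A$. Once the four parity computations above are recorded, each case is a two-line substitution into \eqref{functdefeqn}.
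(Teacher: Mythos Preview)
Your proposal is correct and follows essentially the same approach as the paper's proof: reduce via Lemma~\ref{functlemma} to elementary cobordisms, then compute each handle type directly from Definition~\ref{functdef}. The paper's own proof is extremely terse (essentially ``use functoriality, then read off each case from the definition''), so your version, with the explicit parity computations for each handle type and the reduction to the non-cylindrical component, is simply a more detailed execution of the same argument.
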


\begin{proof}
That it suffices to splice together the maps for elementary cobordisms follows from Lemma \ref{functlemma}.  We 
just have to calculate the maps coming from $k$-handle additions, $k\in\{0,1,2\}$.  These are given completely 
explicitly by Definition \ref{functdef}, which gives the result.
\end{proof}

It is interesting to note that even with this trivial construction, there is a good reason why if we want to make 
$\mathbb O(L)$ into a functor, we have no choice but to use maps are only defined up to $\pm 1$.  
For instance, consider the birth of a circle.  Note that the birth of a circle is the same cobordism as the birth of 
a circle followed by an isotopy from the circle to itself which reverses orientation.  Thus they must induce the same map.  
However, the image of the birth of a circle is $\mathfrak o-\bar{\mathfrak o}$, and this clearly changes sign under the isotopy.

If we are interested in links \emph{embedded in $\mathbb R^3$}, and we want functoriality with respect to orientable cobordisms 
\emph{embedded in $\mathbb R^3\times[0,1]$}, then it is probably possible to twist by an appropriate homomorphism 
$\pi_1(\{\text{unoriented loops in }\mathbb R^3\})\to\{\pm 1\}$ to get rid of the sign ambiguity in $\mathbb O$.  We note that Hatcher \cite{hatcher} 
has proved the Smale Conjecture, which is equivalent to the fact that the space of unoriented \emph{unknotted} loops in $\mathbb R^3$ deformation retracts 
onto the space of unoriented circles in $\mathbb R^3$, and the fundamental group of this space is indeed $\mathbb Z/2\mathbb Z$.  
We suspect this type of twisting is morally what fixes the functoriality of Khovanov homology as in \cite{fixingfunctoriality} and \cite{caprau}.

\subsection{Properties of the $s$-filtration on $\mathbb O(L)$}\label{properties}

Under the equivalence between $\KhL^\ast(L)$ and $\mathbb O(L)$, we get a natural definition of the $s$-filtration on $\mathbb O(L)$.  
The space $\mathbb O(L)$ carries a number of natural operations, and it is reasonable to ask how they respect the $s$-filtration.  
We answer a few of these questions in this section, using only the functorial properties of $\mathbb O(L)$ under cobordism.  
Because we use these soft methods, the properties we derive here would also be valid for a hypothetical generalization 
of the $\tau$-invariant to links.

The following is a rough analogue of Livingston's result \cite{livingston} that $s(K_-)\leq s(K_+)\leq s(K_-)+2$ 
(here $K_-$ and $K_+$ differ at exactly one crossing, which is positive for $K_+$ and negative for $K_-$).

\begin{lemma}\label{crossingchangelemma}
Suppose $L_1$ and $L_2$ differ by a single crossing change.  There is of course a natural isomorphism 
$\phi:\mathbb O(L_1)\overset\sim\to\mathbb O(L_2)$.  Let $\mathbb O(L_1)^+$ denote the space generated 
by orientations in which the given crossing is positive (and similarly define $\mathbb O(L_1)^-$, $\mathbb O(L_2)^+$, 
and $\mathbb O(L_2)^-$).  Pick a strand at the given crossing and an orientation of that strand.  Let 
$\psi:\mathbb O(L_2)\to\mathbb O(L_2)$ be defined by $\psi(\mathfrak o)=\sigma(\mathfrak o)\mathfrak o$, where $\sigma(\mathfrak o)=1$ 
if $\mathfrak o$ agrees with the chosen orientation on the chosen strand, and $\sigma(\mathfrak o)=-1$ otherwise.  Then we have:
\begin{enumerate}
\item\label{all}$\psi\circ\phi:\mathbb O(L_1)\to\mathbb O(L_2)$ is filtered of degree $-2$.
\item\label{positive}$\phi:\mathbb O(L_1)^-\to\mathbb O(L_2)^+$ is filtered of degree $0$.
\end{enumerate}
\end{lemma}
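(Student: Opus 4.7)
The plan is to prove each part by exhibiting an orientable cobordism from $L_1$ to $L_2$ and combining two ingredients: the filtration bound of Theorem \ref{khleedef} (that $F_\Sigma$ is filtered of degree $\chi(\Sigma)$) and the explicit handle-level description of $F_\Sigma$ given by Lemma \ref{functelement}.

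For part 1, I would realize the crossing change as the standard two-saddle cobordism $\Sigma\colon L_1\to L_2$: one saddle resolves the crossing to an intermediate diagram $L_0$, and a second saddle re-creates the opposite-sign crossing. Since $\chi(\Sigma)=-2$, Theorem \ref{khleedef} gives that $F_\Sigma$ is filtered of degree $-2$. Because $L_1$ and $L_2$ have the same components, every orientation $\mathfrak o$ of $L_1$ extends through $\Sigma$ and maps to $\pm\phi(\mathfrak o)$. Exactly one of the two saddles is a merge (the other a split), and by Lemma \ref{functelement} the merge contributes a sign depending on the orientation of the merged circle; a local computation at the crossing shows that this sign flips precisely when the orientation of the chosen strand is reversed, independently of how any other component is oriented. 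This reproduces the operator $\psi$, so $F_\Sigma=\pm\psi\circ\phi$, and the filtration bound of $-2$ transfers.

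For part 2, I would refine the analysis of $\Sigma$ on the subspace $\mathbb O(L_1)^-$. On this subspace, the orientation extends through each of the two saddles as an oriented saddle, so by Rasmussen's Proposition 4.1 (Lemma \ref{isomlemma}) the orientation class of $\mathfrak o$ is carried through $L_0$ to the orientation class of $\phi(\mathfrak o)$ on $L_2$. Tracking the $s$-level of the intermediate orientation class on $L_0$ using the sharp formulas for orientation classes under oriented saddles, combined with property 6 of Theorem \ref{khleedef} (the $s$-filtration shift under change of orientation via the linking number), one computes that the net $s$-filtration shift from $\mathbb O(L_1)^-$ to $\mathbb O(L_2)^+$ along $\phi$ is nonnegative, so $\phi\colon\mathbb O(L_1)^-\to\mathbb O(L_2)^+$ is filtered of degree $0$.

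The main obstacle is part 2. The improvement from $-2$ (generic) to $0$ (on $\mathbb O(L_1)^-$) is not visible at the level of the cobordism's Euler characteristic, since a crossing change generically requires a genus-one cobordism and cannot be realized by a $\chi=0$ cobordism. The gain must instead be extracted by tracking how orientation classes behave at the intermediate stage $L_0$, using the sharp form of Rasmussen's Proposition 4.1 together with the linking-number shift in property 6 of Theorem \ref{khleedef}. The careful bookkeeping of signs (matching Lemma \ref{functelement} to the definitions of $\psi$ and $\mathbb O(L_1)^\pm$) and the case distinction (crossing on one vs.\ two components of $L_1$) are the delicate steps.
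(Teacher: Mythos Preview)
Your Part~1 argument is close in spirit to the paper's but has a gap. A single two-saddle cobordism $\Sigma$ through one fixed resolution $L_0$ does \emph{not} in general carry every orientation of $L_1$ through. When the two strands at the crossing lie on different components $A,B$ of $L_1$, the relevant piece of $\Sigma$ is a $4$-punctured sphere, and only those orientations of $A\cup B$ agreeing with one of its two surface orientations extend; the others are killed by $F_\Sigma$. Thus $F_\Sigma$ realizes $\psi\circ\phi$ only on one of $\mathbb O(L_1)^\pm$, not on all of $\mathbb O(L_1)$. The paper repairs this by using \emph{both} resolutions of the double point of the immersed trace cobordism: each resolution produces (up to the sign $\psi$) the restriction of $\phi$ to $\mathbb O(L_1)^+$ or to $\mathbb O(L_1)^-$, and summing the two maps yields $\psi\circ\phi$ on all of $\mathbb O(L_1)$.

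Your Part~2 argument has a more serious gap. There are no ``sharp formulas for orientation classes under oriented saddles'' available here beyond the statement that each saddle map is filtered of degree $-1$; composing two such maps only gives degree $-2$, which is precisely the bound you are trying to improve. Property~\ref{orchange} of Theorem~\ref{khleedef} shifts the grading and filtration uniformly with the linking number of the chosen orientation, but it does not manufacture any extra filtration gain along a cobordism, so the bookkeeping you describe cannot close the gap. The paper's argument is genuinely different and supplies the missing idea: it exploits Rasmussen's computation $s(T_{2,3})=2$, so that $\mathfrak o+\bar{\mathfrak o}\in\mathbb O(T_{2,3})$ sits in $s$-filtration level $3$. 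Tensoring by this element gives a map $\mathbb O(L_1)^-\to\mathbb O(L_1)^-\otimes\mathbb O(T_{2,3})$ filtered of degree $+3$. One then constructs an \emph{embedded} cobordism $L_1\sqcup T_{2,3}\to L_2$ of Euler characteristic $-3$ by tubing together the double point of the crossing change with the double point of an unknotting homotopy of $T_{2,3}$; these have opposite signs exactly because on $\mathbb O(L_1)^-$ the crossing goes from negative to positive, so the tubing is possible. The resulting map is filtered of degree $-3$, and one checks from (\ref{functdefeqn}) that the composite equals $\phi$ on $\mathbb O(L_1)^-$. The net filtration degree is $+3-3=0$, proving statement~\ref{positive}.
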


\begin{proof}
Our strategy is to find cobordisms which induce the required maps.

For statement \ref{all}, consider the following.  Passing the two strands through each other yields an immersed 
cobordism of Euler characteristic $0$ from $L_1$ to $L_2$.  There are two possible resolutions of 
the double point, giving two maps $\mathbb O(L_1)\to\mathbb O(L_2)$.  Using (\ref{functdefeqn}), 
we see that the two maps are:
\begin{align}
\label{plusmap}&\mathbb O(L_1)\xrightarrow{\text{projection}}\mathbb O(L_1)^+\xrightarrow{\left.\phi\right|_{\mathbb O(L_1)^+}}\mathbb O(L_2)\xrightarrow\psi\mathbb O(L_2)\\
\label{minusmap}&\mathbb O(L_1)\xrightarrow{\text{projection}}\mathbb O(L_1)^-\xrightarrow{\left.\phi\right|_{\mathbb O(L_1)^-}}\mathbb O(L_2)\xrightarrow\psi\mathbb O(L_2)
\end{align}
Since the Euler characteristic of each resolved cobordism is $-2$, both of these maps are filtered of degree $-2$.  
The sum of the two projections is the identity map on $\mathbb O(L_1)$, so the sum of (\ref{plusmap}) and (\ref{minusmap}) is 
just $\psi\circ\phi$; hence it is filtered of degree $-2$ as well.

For statement \ref{positive}, consider the following.  By Rasmussen, $\mathbb O(T_{2,3})$ is supported in $s$-filtration 
levels $1$ and $3$.  Since the mod $4$ grading agrees with the $s$-filtration, we see that $\mathfrak o+\bar{\mathfrak o}$ 
lies in filtration level $3$.  Thus the map $\mathbb O(L_1)^-\to\mathbb O(L_1)^-\otimes\mathbb O(T_{2,3})$ given by 
$\alpha\mapsto\alpha\otimes(\mathfrak o+\bar{\mathfrak o})$ is filtered of degree $3$.  Now consider an immersed cobordism starting 
at $L_1\sqcup T_{2,3}$ which first passes the strands of the crossing of $L_1$ through each other 
to get $L_2$, then unknots the $T_{2,3}$ in a similar manner, and then merges the resulting unknot with $L_2$.  The two double points 
are of opposite signs (when the crossing goes from negative in $L_1$ to positive in $L_2$), so they can be tubed 
together to obtain a cobordism of genus $1$.  Thus the resulting map $\mathbb O(L_1)^-\otimes\mathbb O(T_{2,3})\to\mathbb O(L_2)^+$ 
is filtered of degree $-3$.  The composite is $\phi:\mathbb O(L_1)^-\to\mathbb O(L_2)^+$ (as is clear from (\ref{functdefeqn})), so we are done.
\end{proof}

\begin{definition}
Given a specific orientation $\mathfrak o_i$ on a component $L_i$ of $L$, let $\Res_{\mathfrak o_i}:\mathbb O(L)\to\mathbb O(L)$ 
be orthogonal projection onto the subspace where $L_i$ is oriented by $\mathfrak o_i$.  
For a relative orientation $\mathfrak o_{ij}$ of $L_i\cup L_j$ (two components of $L$), let 
$\Res_{\mathfrak o_{ij},\bar{\mathfrak o}_{ij}}:\mathbb O(L)\to\mathbb O(L)$ 
be projection onto the subspace where $L_i\cup L_j$ has this relative orientation, composed with multiplication 
by $\sigma:O(L)\to\{\pm 1\}$ which flips sign depending on the orientation on $L_i$.
\end{definition}

The following should be thought of as a generalization of Rasmussen's theorem that characterizes $d_K$ for knots $K$.

\begin{lemma}\label{restrictiondegree}
The operators $\Res_{\mathfrak o_i}$ and $\Res_{\mathfrak o_{ij},\bar{\mathfrak o}_{ij}}$ are both filtered of degree $-2$.
\end{lemma}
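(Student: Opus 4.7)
The plan is to parallel the strategy of Lemma \ref{crossingchangelemma} and realize each projection (or an affine combination containing it) as the map induced by an explicit cobordism of Euler characteristic $-2$, then invoke Theorem \ref{khleedef}.

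For $\Res_{\mathfrak o_i}$, I would use a cobordism $\Sigma_i:L\to L$ which is the identity outside a small ball $B$ meeting $L$ only in two arcs of $L_i$. Inside $B$, $\Sigma_i$ consists of a $1$-handle splitting $L_i$ into two circles followed by a $1$-handle merging them back to $L_i$. Then $\chi(\Sigma_i)=-2$, so $F_{\Sigma_i}$ is filtered of degree $-2$. Applying Lemma \ref{functelement}: the split sends each $\mathfrak o\in O(L)$ to its unique extension, and the subsequent merge returns $\pm\mathfrak o$ with sign determined by the orientation of the merged circle. Reversing the orientation of $L_i$ reverses this sign, so after fixing the overall $\pm 1$ one obtains $F_{\Sigma_i}=\Res_{\mathfrak o_i}-\Res_{\bar{\mathfrak o}_i}$. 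Combined with $\Res_{\mathfrak o_i}+\Res_{\bar{\mathfrak o}_i}=\id$, this gives $\Res_{\mathfrak o_i}=\tfrac12(\id+F_{\Sigma_i})$. Since $\id$ is filtered of degree $0$ (hence also of degree $-2$) and $F_{\Sigma_i}$ is filtered of degree $-2$, so is $\Res_{\mathfrak o_i}$.

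For $\Res_{\mathfrak o_{ij},\bar{\mathfrak o}_{ij}}$, I would use a cobordism $\Sigma_{ij}:L\to L$ supported in a small ball containing an arc from $L_i$ to $L_j$, which attaches first a $1$-handle merging $L_i$ and $L_j$ into a single component and then a $1$-handle splitting them back. Again $\chi(\Sigma_{ij})=-2$. By Lemma \ref{functelement}, the merge annihilates any $\mathfrak o$ whose restriction to $L_i\cup L_j$ is not in the relative-orientation class $\{\mathfrak o_{ij},\bar{\mathfrak o}_{ij}\}$ determined by the attaching arc, and otherwise returns a $\pm 1$ multiple of the extension, with the sign controlled by the orientation of the merged circle (equivalently, by the orientation of $L_i$); the subsequent split extends uniquely back to $L$. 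Choosing the attaching arc to realize the given relative orientation, one finds $F_{\Sigma_{ij}}=\pm\Res_{\mathfrak o_{ij},\bar{\mathfrak o}_{ij}}$, so $\Res_{\mathfrak o_{ij},\bar{\mathfrak o}_{ij}}$ is filtered of degree $-2$.

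The main obstacle is the sign bookkeeping in both cases: verifying that the $\pm 1$ emerging from the merge rule in Lemma \ref{functelement} really assembles into the pattern $\Res_{\mathfrak o_i}-\Res_{\bar{\mathfrak o}_i}$ in the first case and into the twisted projection defining $\Res_{\mathfrak o_{ij},\bar{\mathfrak o}_{ij}}$ in the second. Both checks follow directly from the defining property of $\sigma_A$ in Definition \ref{functdef}, and the inherent $\pm 1$ ambiguity in $F_\Sigma$ absorbs any residual overall sign choice.
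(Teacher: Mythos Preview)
Your argument for $\Res_{\mathfrak o_{ij},\bar{\mathfrak o}_{ij}}$ is exactly the paper's: merge $L_i$ and $L_j$ with a $1$-handle compatible with the chosen relative orientation, then split them back; Lemma~\ref{functelement} identifies the induced map with the twisted projection, and $\chi=-2$ gives the filtration bound.

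Your treatment of $\Res_{\mathfrak o_i}$ is correct but genuinely different from the paper's. The paper introduces an auxiliary unknot $U$: the map $\alpha\mapsto\alpha\otimes\mathfrak o$ into $\mathbb O(L)\otimes\mathbb O(U)$ is filtered of degree $-1$ (since a single orientation of the unknot sits in filtration level $-1$), and then a $1$-handle merging $U$ into $L_i$ (compatibly with $\mathfrak o_i$) contributes another $-1$ and produces $\Res_{\mathfrak o_i}$ on the nose. You instead realize the sign-flip map $\psi=\Res_{\mathfrak o_i}-\Res_{\bar{\mathfrak o}_i}$ via a split-then-merge cobordism on $L_i$, and then average with the identity to recover $\Res_{\mathfrak o_i}=\tfrac12(\id+\psi)$. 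In effect you have reversed the logical dependence between this lemma and Lemma~\ref{signfliplemma}: the paper deduces $\psi$ filtered of degree $-2$ \emph{from} the $\Res_{\mathfrak o_i}$ statement, whereas you prove $\psi$ directly and deduce $\Res_{\mathfrak o_i}$. Your route avoids appealing to the explicit $s$-filtration on $\mathbb O(U)$, at the cost of the extra averaging step; the paper's route gives $\Res_{\mathfrak o_i}$ as a single cobordism map without any linear combination.
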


\begin{proof}
For $\Res_{\mathfrak o_{ij},\bar{\mathfrak o}_{ij}}$, consider the cobordism formed by first adding a $1$-handle connecting $L_i$ and $L_j$ (in such 
a way that the given relative orientation extends over the cobordism) and then adding a second $1$-handle splitting the resulting component 
back into $L_i\cup L_j$.  Clearly this cobordism induces the map $\Res_{\mathfrak o_{ij},\bar{\mathfrak o}_{ij}}:\mathbb O(L)\to\mathbb O(L)$, and it 
has Euler characteristic $-2$, so we are done.

For $\Res_{\mathfrak o_i}$, let $U$ be the unknot, and consider the map $\mathbb O(L)\to\mathbb O(L)\otimes\mathbb O(U)=\mathbb O(L\sqcup U)$ given 
by $\alpha\mapsto\alpha\otimes\mathfrak o$.  This is filtered of degree $-1$.  Now compose with the map $\mathbb O(L\sqcup U)\to\mathbb O(L)$ given 
by the cobordism obtained by adding a $1$-handle to merge the unknot and $L_i$ (such that the orientation $\mathfrak o$ and the desired 
orientation $\mathfrak o_i$ extend over the $1$-handle).  This cobordism has Euler characteristic $-1$, so the composition 
$\mathbb O(L)\to\mathbb O(L)\otimes\mathbb O(U)\to\mathbb O(L)$ is filtered of degree $-2$.  This map also clearly equals $\Res_{\mathfrak o_i}$.
\end{proof}

\begin{lemma}\label{signfliplemma}
Let $\psi:\mathbb O(L)\to\mathbb O(L)$ be defined by $\psi(\mathfrak o)=\sigma(\mathfrak o)\mathfrak o$, 
where $\sigma(\mathfrak o)=\pm1$ depending on the orientation of some specific component $L_i\subseteq L$.  
Then $\psi$ is filtered of degree $-2$.
\end{lemma}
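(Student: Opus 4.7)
The plan is to realize $\psi$, up to overall sign, as the map induced by a cobordism of Euler characteristic $-2$, following the same general strategy as in Lemmas \ref{crossingchangelemma} and \ref{restrictiondegree}. Since every cobordism-induced map $F_\Sigma$ is filtered of degree $\chi(\Sigma)$, and since $F_\Sigma$ is only defined up to $\pm 1$ anyway, exhibiting such a cobordism will immediately give that $\psi$ is filtered of degree $-2$.

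The cobordism I would use is the following. Attach a $1$-handle to $L_i$ that splits it into two circles $L_i'$ and $L_i''$, producing an intermediate link $L'$ (with $L_i$ replaced by $L_i' \cup L_i''$ and all other components unchanged). Then attach a second $1$-handle that merges $L_i'$ and $L_i''$ back into a single component, recovering $L$. Each $1$-handle addition drops $\chi$ by $1$, so the total Euler characteristic of this cobordism $\Sigma$ from $L$ to $L$ is $-2$.

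Now I would compute the induced map using the handle-by-handle description in Lemma \ref{functelement}. The splitting $1$-handle sends every orientation $\mathfrak o \in O(L)$ to its unique extension on $L'$ (no orientations get killed, since any orientation on a circle restricts to compatible orientations on the two halves of a split). The merging $1$-handle, applied to an orientation on $L'$ that came from such a split, always extends back to the merged component $L_i$, so again nothing is killed; but the merge map multiplies by a sign that depends on the orientation of the newly merged circle, which in our case is exactly $L_i$. Composing, we get that $\mathfrak o \mapsto \pm \sigma(\mathfrak o)\,\mathfrak o$, where $\sigma(\mathfrak o)$ depends only on the orientation of $L_i$; this is $\pm \psi$.

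The main thing to check is that the sign rule from the merging $1$-handle genuinely coincides (up to a global $\pm 1$) with the sign function $\sigma$ defining $\psi$. This is essentially tautological from Lemma \ref{functelement}: the merge sign depends on the orientation of the merged circle, which here is $L_i$, and flipping the orientation of $L_i$ is exactly what flips $\sigma$. Once this identification is in hand, $F_\Sigma = \pm \psi$ is filtered of degree $\chi(\Sigma) = -2$, which completes the proof.
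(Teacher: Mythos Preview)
Your argument is correct, but it differs from the paper's proof. The paper simply observes that $\psi = \Res_{\mathfrak o_i} - \Res_{\bar{\mathfrak o}_i}$, and since Lemma \ref{restrictiondegree} already shows each $\Res_{\mathfrak o_i}$ is filtered of degree $-2$, so is any linear combination. This is a one-line reduction to the previous lemma.

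Your approach instead constructs a single cobordism realizing $\psi$ directly: split $L_i$ with a $1$-handle, then merge the two pieces back with a second $1$-handle, for total Euler characteristic $-2$. Tracing through Lemma \ref{functelement}, the splitting step sends each orientation to its unique extension, and the merging step reintroduces exactly the orientation on $L_i$ while contributing a sign that flips with that orientation (this is the content of the sign rule in Definition \ref{functdef}, since for the merge pair-of-pants the exponent $(\chi - |X| + |Y|)/2 = (-1-2+1)/2$ is odd). So $F_\Sigma = \pm\psi$, and the filtration claim follows.

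Your route is essentially the same construction the paper uses in Lemma \ref{restrictiondegree} for $\Res_{\mathfrak o_{ij},\bar{\mathfrak o}_{ij}}$, transported from two components to one. It has the virtue of being self-contained --- it does not invoke the $\Res_{\mathfrak o_i}$ case of Lemma \ref{restrictiondegree}, whose proof required the auxiliary unknot trick. The paper's version is shorter given what has already been established, but yours would work equally well as a standalone argument.
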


\begin{proof}
Such a map is a linear combination of $\Res_{\mathfrak o_i}$ and $\Res_{\bar{\mathfrak o_i}}$.
\end{proof}

\begin{lemma}
For every $\alpha\in\mathbb O(L)$, we have $s(\alpha)=\min(s(\alpha+\bar\alpha),s(\alpha-\bar\alpha))$.  In particular, it follows that $s(\alpha)=s(\bar\alpha)$.
\end{lemma}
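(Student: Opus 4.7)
The plan is to use the interplay between the $\mathbb{Z}/4\mathbb{Z}$ grading on $\mathbb{O}(L)$ (defined via the $\pm 1$ eigenspaces of the involution $\alpha \mapsto \bar{\alpha}$) and property \ref{smodfour} of Theorem \ref{khleedef}, which says that the $s$-filtration breaks up as a direct sum of filtrations on the mod $4$ graded pieces.

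First I would decompose $\alpha$ into its $\pm 1$ eigenvectors under orientation reversal: write $\alpha = \alpha_+ + \alpha_-$ where $\alpha_+ = \tfrac{1}{2}(\alpha + \bar{\alpha})$ and $\alpha_- = \tfrac{1}{2}(\alpha - \bar{\alpha})$. By definition of the mod $4$ grading on $\mathbb{O}(L)$, the element $\alpha_+$ lies in grading $-|L|$ and $\alpha_-$ lies in grading $2-|L|$, so under the identification with $\KhL^\ast(L)$, these two summands live in distinct mod $4$ graded pieces.

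Next, I would invoke Theorem \ref{khleedef} item \ref{smodfour}: the $s$-filtration respects the mod $4$ grading, meaning that the filtered subspace $(\KhL^h(L))^s$ is the direct sum of its intersections with the two graded pieces. A general fact about filtrations that split across a direct sum is that for $\beta = \beta_1 + \beta_2$ with $\beta_i$ in the respective summands, $s(\beta) = \min(s(\beta_1), s(\beta_2))$. Applying this to $\alpha = \alpha_+ + \alpha_-$ and noting that scaling by the nonzero constant $\tfrac{1}{2}$ does not change the filtration level, I obtain $s(\alpha) = \min(s(\alpha+\bar{\alpha}), s(\alpha-\bar{\alpha}))$.

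For the second assertion, I would simply apply the identity just proved to $\bar{\alpha}$ in place of $\alpha$: since $\bar{\alpha} + \overline{\bar{\alpha}} = \alpha + \bar{\alpha}$ and $\bar{\alpha} - \overline{\bar{\alpha}} = -(\alpha - \bar{\alpha})$, and $s$ is invariant under sign change, the minimum on the right is unchanged, yielding $s(\bar{\alpha}) = s(\alpha)$. There is really no substantive obstacle here; the whole content is the decomposition into eigenspaces of $\alpha \mapsto \bar{\alpha}$ together with the splitting of the $s$-filtration across the mod $4$ grading. The only place one has to be a little careful is to confirm that the mod $4$ grading on $\mathbb{O}(L)$ (which was defined combinatorially via the involution) matches the mod $4$ grading on $\KhL^\ast(L)$ under the isomorphism of functors, but this is built into the construction in Section \ref{orientationgroupsec}.
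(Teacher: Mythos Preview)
Your proof is correct and follows essentially the same approach as the paper's: both decompose $\alpha$ into its $\pm 1$ eigenparts under orientation reversal and invoke Theorem~\ref{khleedef}(\ref{smodfour}). The only cosmetic difference is that the paper uses the clause that the filtration on the grading-$k$ piece is supported in $s\equiv k\pmod 4$ to deduce $s(\alpha+\bar\alpha)\ne s(\alpha-\bar\alpha)$ (whence the min formula is automatic for any filtration), whereas you use the splitting of the filtration across the graded pieces directly; both are valid readings of property~\ref{smodfour}.
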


\begin{proof}
Note that $\alpha+\bar\alpha$ and $\alpha-\bar\alpha$ are in different mod $4$ gradings, so they are sent to different mod $4$ gradings in $\KhL^\ast(L)$.  Thus by Theorem \ref{khleedef} property \ref{smodfour}, we know that $s(\alpha+\bar\alpha)$ and $s(\alpha-\bar\alpha)$ are different mod $4$.  Thus $s(\alpha+\bar\alpha)\ne s(\alpha-\bar\alpha)$, so $s(\alpha)=\min(s(\alpha+\bar\alpha),s(\alpha-\bar\alpha))$.
\end{proof}

Suppose we have a relative orientation $(\mathfrak o,\bar{\mathfrak o})$ of $L$.  Let 
$V_{\mathfrak o,\bar{\mathfrak o}}\subseteq\mathbb O(L)$ be the subspace generated by 
$\mathfrak o$ and $\bar{\mathfrak o}$.  Then let us consider the restriction of the $s$-filtration to 
$V_{\mathfrak o,\bar{\mathfrak o}}=\mathbb Q(\mathfrak o+\bar{\mathfrak o})\oplus\mathbb Q(\mathfrak o-\bar{\mathfrak o})$.  
Note that this direct sum decomposition is into mod $4$ graded pieces; thus the elements $\mathfrak o+\bar{\mathfrak o}$ 
and $\mathfrak o-\bar{\mathfrak o}$ are sent to different mod $4$ gradings in $\KhL^\ast(L)$ which differ 
by exactly $2$.  Thus the $s$-filtration on $V_{\mathfrak o,\bar{\mathfrak o}}$ is completely 
described by the two integers $s(\mathfrak o+\bar{\mathfrak o})$ and $s(\mathfrak o-\bar{\mathfrak o})$ 
(which differ by $2$ mod $4$).  By Lemma \ref{signfliplemma}, $s(\mathfrak o+\bar{\mathfrak o})$ and 
$s(\mathfrak o-\bar{\mathfrak o})$ differ by exactly two, and we let the oriented 
$s(L,\mathfrak o)=\frac 12[s(\mathfrak o+\bar{\mathfrak o})+s(\mathfrak o-\bar{\mathfrak o})]$.

\begin{definition}\label{beliakovawehrliinvariant}
The invariant constructed in the previous paragraph is $s(L,\mathfrak o)$.  It was first defined by Beliakova and Wehrli \cite{wehrli}.
\end{definition}

For a knot $K$, there is just one relative orientation.  This gives Rasmussen's invariant $s(K)$, which determines the 
$s$-filtration on $\KhL^\ast(K)$.  For links, however, there is much more to 
the $s$-filtration on $\KhL^\ast(L)$ that is not captured by the function $\mathfrak o\mapsto s(L,\mathfrak o)$.  For example, for any 
alternating link $L$ with zero linking matrix, all $s(L,\mathfrak o)$ are equal (say, to $s_0$), and 
$\sum_{i,j}d_L(i,j)t^iq^j=2^{\left|L\right|-1}q^{s_0}(q+q^{-1})$.  On the other hand, if $L$ is unlink on 
$n$ components, then $s(L,\mathfrak o)$ are all equal (this time to $1-n$), however in this case $\sum_{i,j}d_L(i,j)t^iq^j=(q+q^{-1})^n$.  
Thus for link concordance, the $s$-filtration on $\mathbb O(L)$ is a stronger invariant than the function $\mathfrak o\mapsto s(L,\mathfrak o)$.

\section{Examples}\label{examples}

We now summarize some calculations of the invariant $d_L:\mathbb Z\times\mathbb Z\to\mathbb Z_{\geq 0}$ for some links $L$.  
We used the package \texttt{KnotTheory`} maintained by Bar-Natan \cite{knottheory}, in particular the program to calculate 
Khovanov homology written by Scott Morrison.  This allows us to calculate $\Kh^\ast(L)$ for the link in question.  
We use the simple fact that if $\dim\Kh^h(L)=\dim\KhL^h(L)$, then by virtue of the spectral sequence from $\Kh^\ast(L)$ 
to $\KhL^\ast(L)$, the support of the $s$-filtration on $\KhL^h(L)$ is given exactly by the $q$-graded dimension of $\Kh^h(L)$.  
Many interesting links have lots of crossings, and thus computing the Khovanov homology is time consuming on a computer; 
we just list the cases that we have been able to compute.

Most of the links in the standard link tables are quasi-alternating, so they do not present a particularly interesting 
case for the filtration on $\KhL^\ast(L)$ (it is just supported in two 
levels, so only their absolute height is interesting).  So instead, we've taken as our examples some links with extra structure.

The function $d_L$ is a link concordance invariant, and thus there are some easy corollaries using Theorem \ref{mainthm2} 
distinguishing the link concordance classes of the links we consider below from other links whose $d_L$ one could calculate 
(e.g.\ one can easily see which are concordant to a quasi-alternating link).  Theorem \ref{mainthm2} also implies effective bounds on the genus 
of component-preserving orientable cobordisms between these links and links with certain splitting numbers.

\subsection{Cablings of $T_{2,p}$}

Let $L_p$ be the $(2,0)$-cabling of $T_{2,p}$.  Then the linking matrix of $L_p$ is zero, and we have (for $p$ odd, $1\leq p\leq 11$):
\begin{equation}\label{cableformula}
\sum_{i,j}d_{L_p}(i,j)\cdot t^iq^j=1+q^2+q^{2p-4}+q^{2p-2}
\end{equation}
We conjecture that this is true for all odd $p\geq 1$.  We can prove the following:

\begin{lemma}
Fix an odd $p\geq 5$.  Let $(\mathfrak o_+,\bar{\mathfrak o}_+)$ be the relative orientation of $L_p$ where the two strands are oriented in the same direction, and let $V_+$ be the subspace of $\mathbb O(L_p)$ generated by $(\mathfrak o_+,\bar{\mathfrak o}_+)$.  Similarly define $(\mathfrak o_-,\bar{\mathfrak o}_-)$ and $V_-$ with the two strands oriented in opposite directions.

Then the $s$-filtration on $\mathbb O(L)=V_+\oplus V_-$ splits up as a filtration on each $V_\pm$.  Furthermore, $V_+$ is supported in filtration degrees $(2p-4,2p-2)$, and $V_-$ is supported in degrees $(0,2)$ or $(-2,0)$.  In particular, we have:
\begin{equation}\label{weakenedcableformula}
\sum_{i,j}d_{L_p}(i,j)\cdot t^iq^j=\left\{\begin{matrix}1+q^2\cr\text{or}\cr q^{-2}+1\end{matrix}\right\}+q^{2p-4}+q^{2p-2}
\end{equation}
\end{lemma}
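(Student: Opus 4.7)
The plan is to produce four explicit cobordisms, compute their induced maps on $\mathbb{O}(L_p)$ using Definition \ref{functdef} and Lemma \ref{functelement}, and use the filtered-of-degree-$\chi(\Sigma)$ property to sandwich the $s$-values of the four natural generators $\mathfrak{o}_\pm \pm \bar{\mathfrak{o}}_\pm$. Throughout I will use that $s(\mathfrak{o}_\pm + \bar{\mathfrak{o}}_\pm)$ and $s(\mathfrak{o}_\pm - \bar{\mathfrak{o}}_\pm)$ lie in different mod-$4$ classes, so Lemma \ref{signfliplemma} (together with $\psi^2 = \pm\id$) forces them to differ by exactly $2$.

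For $V_-$, the two cables with antiparallel orientation cobound an embedded annulus because $\lk=0$. Viewed as a cobordism $\emptyset \to L_p$ built from a $0$-handle followed by a splitting $1$-handle, it has $\chi=0$, extends precisely the orientations in $V_-$, and by Lemma \ref{functelement} sends $1$ to $\mathfrak{o}_- - \bar{\mathfrak{o}}_-$; this gives $s(\mathfrak{o}_- - \bar{\mathfrak{o}}_-) \geq 0$. The reverse annulus $L_p \to \emptyset$ sends $\mathfrak{o}_- - \bar{\mathfrak{o}}_-$ to a nonzero scalar in $\mathbb{Q}$ and kills $\mathfrak{o}_- + \bar{\mathfrak{o}}_-$, yielding the matching upper bound $s(\mathfrak{o}_- - \bar{\mathfrak{o}}_-) \leq 0$. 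Combined with the differ-by-$2$ constraint, this pins the support of $V_-$ to $(0,2)$ or $(-2,0)$.

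For $V_+$, the upper bound comes from the cobordism $\Sigma_{\mathrm{SS}}: L_p \to \emptyset$ given by two disjoint genus-$\frac{p-1}{2}$ Seifert surfaces of the two components: it has $\chi = 4-2p$, sends $\mathfrak{o}_+ + \bar{\mathfrak{o}}_+$ to a nonzero element of $\mathbb{Q}$ while killing $\mathfrak{o}_+ - \bar{\mathfrak{o}}_+$, and hence forces $s(\mathfrak{o}_+ + \bar{\mathfrak{o}}_+) \leq 2p-4$. For the lower bound I would use the merging cobordism $\Sigma_+: L_p \to K_+$ obtained by attaching a half-twisted band between the two cables inside the solid-torus neighborhood of $T_{2,p}$: it has $\chi = -1$, extends $\mathfrak{o}_+$, and produces the $(2,1)$-cable $K_+$ of $T_{2,p}$, which is a strongly quasipositive iterated torus knot with Seifert genus $p-1$ and hence $s(K_+) = 2p-2$. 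The two canonical Lee generators $\mathfrak{o}_{K_+} \pm \bar{\mathfrak{o}}_{K_+}$ then sit at $s$-levels $2p-3$ and $2p-1$ (matched to the mod-$4$ eigenspaces for a knot), and pushing them back to $V_+$ through $F_{-\Sigma_+}$ (filtered of degree $-1$) yields $s(\mathfrak{o}_+ + \bar{\mathfrak{o}}_+) \geq 2p-4$ and $s(\mathfrak{o}_+ - \bar{\mathfrak{o}}_+) \geq 2p-2$. The matching upper bound $s(\mathfrak{o}_+ - \bar{\mathfrak{o}}_+) \leq 2p-2$ follows from Lemma \ref{signfliplemma} applied to $s(\mathfrak{o}_+ + \bar{\mathfrak{o}}_+) \leq 2p-4$.

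Since for $p \geq 5$ the support $\{2p-4, 2p-2\}$ of $V_+$ lies strictly above the support $\{-2,0,2\}$ of $V_-$, the leading terms of elements in $V_+$ and $V_-$ live at different $s$-levels and cannot cancel, so $s(v_+ + v_-) = \min(s(v_+), s(v_-))$, giving the direct-sum splitting of the filtration. The main obstacle is the lower bound for $V_+$: one must both identify the knot $K_+$ produced by the half-twisted band merge and know that $s(K_+) = 2p-2$. The cleanest route is via Schubert's cable-genus formula together with the strongly quasipositive structure of iterated torus knots built by positive cabling (yielding $s = 2g$); alternatively, routing the band through the complement of the solid torus produces $T_{2,p} \# T_{2,p}$, for which additivity of $s$ under connected sum gives the same value.
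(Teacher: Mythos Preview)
Your argument is essentially correct and genuinely different from the paper's, but the crucial step---the lower bound for $V_+$---rests on an input that deserves more scrutiny.

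For $V_-$ you use the embedded annulus on $\partial N(T_{2,p})$ cobounded by the two strands, as a cobordism $\emptyset\leftrightarrow L_p$ of Euler characteristic $0$; the sign computation via Definition~\ref{functdef} indeed pins $s(\mathfrak o_--\bar{\mathfrak o}_-)=0$. The paper instead attaches a single $1$-handle merging the two antiparallel strands to obtain the \emph{unknot} $U$, and uses the resulting isomorphisms $V_-\xrightarrow{\sim}\mathbb O(U)\xrightarrow{\sim}V_-$ (each filtered of degree $-1$). Both routes are short; yours is slightly sharper in that it nails one level exactly.

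For the $V_+$ upper bound you and the paper agree (two disjoint Seifert surfaces, i.e.\ a cobordism to the two-component unlink). For the lower bound the paper does \emph{not} pass to a cable knot. Instead it observes that the blackboard $2$-cable $L_p'=(T_{2,p})_{2,2p}$ has a \emph{positive} diagram, so Rasmussen's positive-diagram formula gives $s_{L_p'}(\mathfrak o_+)=4p-4$ directly; then $p$ crossing changes turn $L_p'$ into $L_p$, and $p$ applications of Lemma~\ref{crossingchangelemma} yield $s_{L_p}(\mathfrak o_+)\geq 2p-4$. This stays entirely within the paper's toolkit.

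Your route instead requires $s(K_+)=2p-2$ for $K_+$ the $(2,1)$-cable of $T_{2,p}$. This is true, but the justification you sketch needs care: the $(2,1)$-cable pattern does not bound an orientable surface in the solid torus (it represents $2\in H_1$), so ``strongly quasipositive pattern'' is not the right phrase; one must instead build a quasipositive Seifert surface for $K_+$ directly (two parallel quasipositive surfaces for $T_{2,p}$ joined by a single positive band) and then invoke the slice-Bennequin sharpness $s=2g_4$ for quasipositive knots. These are standard but external facts, not available from the paper itself. Your fallback claim that routing the band through the exterior yields $T_{2,p}\#T_{2,p}$ is not correct: the $(2,0)$-cable is not a split link, so no band move on it can be rearranged into a connected sum of separated summands, and in general banding two $0$-framed parallels of $K$ does not produce $K\#K$.

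The final step (disjoint supports for $p\geq 5$ force the filtration to split) is identical to the paper's.
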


Of course, it is not true in general that the $s$-filtration splits up as a direct sum over all relative orientations $(\mathfrak o,\bar{\mathfrak o})$ of filtrations on $V_{\mathfrak o,\bar{\mathfrak o}}$ (for example, this fails for any split link by Theorem \ref{khleedef} property \ref{tensorproperty}).

\begin{proof}
We thank the referee for the argument in this paragraph.  The standard diagram for $T_{2,p}$ has $p$ positive crossings, so the $2$-cabling in the blackboard framing is the $(2,2p)$-cable, which we call $L_p'$.  Orient both $L_p$ and $L_p'$ via $\mathfrak o_+$ (we let $\mathfrak o_+$ denote the orientation on $L_p'$ corresponding naturally to $\mathfrak o_+$ on $L_p$).  Now $L_p'$ has a positive diagram coming from the positive diagram of $T_{2,p}$.  In this diagram, there are $4p$ crossings and $4$ circles in the oriented resolution.  Thus by Rasmussen \cite[p439, section 5.2]{rasmussen}, we have $s_{L_p'}(\mathfrak o_+)=4p-4$.  Now we can transform $L_p'$ into $L_p$ by $p$ crossing changes (they differ by $2p$ half-twists between the two strands).  Thus $p$ iterated applications of Lemma \ref{crossingchangelemma} imply that:
\begin{equation}
s_{L_p}(\mathfrak o_+)\geq 2p-4
\end{equation}
On the other hand, $L_p$ bounds two parallel copies of a Seifert surface for $T_{2,p}$, each of genus $\frac 12(p-1)$.  These give a component-preserving cobordism of genus $p-1$ from $L_p$ to the unlink.  Every orientation $\mathfrak o$ of the unlink has $s(\mathfrak o)=-2$.  Thus we have:
\begin{equation}
s_{L_p}(\mathfrak o_+)\leq s_{\text{unlink}}(\mathfrak o)+2(p-1)=2p-4
\end{equation}
Thus $s_{L_p}(\mathfrak o_+)=2p-4$.  By the discussion surrounding Definition \ref{beliakovawehrliinvariant}, this shows that the restriction of the $s$-filtration to $V_+$ is supported in degrees $(2p-4,2p-2)$.

Now for $\mathfrak o_-$, observe that adding a $1$-handle merging the two components of $L_p$ yields the unknot $U$.  Thus if we orient $L_p$ by $\mathfrak o_-$, we have maps $\mathbb O(L_p)\to\mathbb O(U)\to\mathbb O(L_p)$, and in fact they give \emph{isomorphisms}:
\begin{equation}
V_-\xrightarrow\sim\mathbb O(U)\xrightarrow\sim V_-
\end{equation}
which are filtered of degree $-1$.  Since $\mathbb O(U)$ is supported in degrees $\pm 1$, and $V_-$ is supported in even degrees (which differ by exactly two), we see that the support of $V_-$ is either $(0,2)$ or $(-2,0)$.

Now it remains to show that the $s$-filtration on $\mathbb O(L)$ decomposes as the direct sum of the filtrations on each $V_\pm$.  In other words, we need to show that $s(\alpha_++\alpha_-)=\min(s(\alpha_+),s(\alpha_-))$ for all pairs $\alpha_\pm\in V_\pm$.  Our assumption $p\geq 5$ implies $2p-4>2$, so by the results above, we have $s(\alpha_+)\ne s(\alpha_-)$ (unless $\alpha_+=\alpha_-=0$).  It follows that $s(\alpha_++\alpha_-)=\min(s(\alpha_+),s(\alpha_-))$.
\end{proof}

\subsection{$T_{n,n}$}

We now consider the $(n,n)$-torus links for $1\leq n\leq 6$ (with all components oriented the same direction).  We have:
\begin{align}
\sum_{i,j}d_{T_{1,1}}(i,j)\cdot t^iq^j&=q+q^{-1}\cr
\sum_{i,j}d_{T_{2,2}}(i,j)\cdot t^iq^j&=[1+q^2]+(tq^3)^2[q^{-2}+1]\cr
\sum_{i,j}d_{T_{3,3}}(i,j)\cdot t^iq^j&=[q^3+q^5]+(tq^3)^4[q^{-3}+3q^{-1}+2q]\cr
\sum_{i,j}d_{T_{4,4}}(i,j)\cdot t^iq^j&=[q^8+q^{10}]+(tq^3)^6[q^{-2}+4+3q^2]+(tq^3)^8[q^{-4}+3q^{-2}+2]\cr
\sum_{i,j}d_{T_{5,5}}(i,j)\cdot t^iq^j&=[q^{15}+q^{17}]+(tq^3)^8[q+5q^3+4q^5]+(tq^3)^{12}[q^{-5}+5q^{-3}+9q^{-1}+5q]\cr
\sum_{i,j}d_{T_{6.6}}(i,j)\cdot t^iq^j&=[q^{24}+q^{26}]+(tq^3)^{10}[q^6+6q^8+5q^{10}]\cr&+(tq^3)^{16}[q^{-4}+6q^{-2}+14+9q^2]+(tq^3)^{18}[q^{-6}+5q^{-4}+9q^{-2}+5]
\end{align}
In accordance with Theorem \ref{mainthm2} property \ref{orchange}, it is natural to separate out factors of $tq^3$.  
For $n=5,6$, computing the final answer requires use of Theorem \ref{mainthm2} property \ref{supportmod}, in particular 
the fact that the dimensions of $\KhL^\ast$ supported in $s$-filtration level $\left|L\right|$ and $\left|L\right|+2$ are equal
(this enables us to see which parts of $\Kh^\ast$ are killed in the spectral sequence).

The referee has noticed the following pattern for the values of $d_{T_{n,n}}$.  Define polynomials 
$P_{n,k}\in\mathbb Q[q,q^{-1}]$ for $n\geq 0$ and $0\leq 2k\leq n$ by the recurrence:
\begin{align}
P_{0,0}&=1\\
\text{for $n\geq 1$}\quad P_{n,k}&=\begin{cases}
qP_{n-1,k-1}+q^{-1}P_{n-1,k}&2k+2\leq n\cr
qP_{n-1,k-1}+\frac 12(1+q^{-1})P_{n-1,k}&2k+1=n\cr
2P_{n-1,k-1}&2k=n
\end{cases}
\end{align}
where we interpret $P_{n,k}$ as zero if $k<0$.  Certainly $P_{n,k}$ are some sort of $q$-deformed binomial coefficients.  Then for $1\leq p\leq 6$, we have:
\begin{equation}\label{pattern}
\sum_{i,j}d_{T_{n,n}}(i,j)\cdot t^iq^j=\sum_{k=0}^n(tq^3)^{2k(n-k)}q^{(n-2k)^2}P_{n,\min(k,n-k)}(q^2)
\end{equation}
One would naturally conjecture that this equality holds for all larger $p$ as well.

\bibliographystyle{plain}
\bibliography{leeconcordance}

\begin{thebibliography}{10}

\bibitem{barnatanexpo}
Dror Bar-Natan.
\newblock On {K}hovanov's categorification of the {J}ones polynomial.
\newblock {\em Algebr. Geom. Topol.}, 2:337--370 (electronic), 2002.

\bibitem{barnatan}
Dror Bar-Natan.
\newblock Khovanov's homology for tangles and cobordisms.
\newblock {\em Geom. Topol.}, 9:1443--1499, 2005.

\bibitem{bnalg}
Dror Bar-Natan.
\newblock Fast {K}hovanov homology computations.
\newblock {\em J. Knot Theory Ramifications}, 16(3):243--255, 2007.

\bibitem{knottheory}
Dror Bar-Natan and Scott Morrison.
\newblock \texttt{KnotTheory`} {P}ackage.
\newblock {\em
  \texttt{http://katlas.org/\linebreak[0]wiki/\linebreak[0]The\_Mathematica\_P%
ackage\_KnotTheory`}}, 2008.

\bibitem{wehrli}
Anna Beliakova and Stephan Wehrli.
\newblock Categorification of the colored {J}ones polynomial and {R}asmussen
  invariant of links.
\newblock {\em Canad. J. Math.}, 60(6):1240--1266, 2008.

\bibitem{caprau}
Carmen~Livia Caprau.
\newblock {$\rm sl(2)$} tangle homology with a parameter and singular
  cobordisms.
\newblock {\em Algebr. Geom. Topol.}, 8(2):729--756, 2008.

\bibitem{fixingfunctoriality}
David Clark, Scott Morrison, and Kevin Walker.
\newblock Fixing the functoriality of {K}hovanov homology.
\newblock {\em Geom. Topol.}, 13(3):1499--1582, 2009.

\bibitem{spc4calc}
Michael Freedman, Robert Gompf, Scott Morrison, and Kevin Walker.
\newblock Man and machine thinking about the smooth 4-dimensional {P}oincar\'e
  conjecture.
\newblock {\em Quantum Topol.}, 1(2):171--208, 2010.

\bibitem{powell}
Stefan Friedl and Mark Powell.
\newblock Cobordisms to weakly splittable links.
\newblock {\em \texttt{arXiv math.GT 1112.3685}}, 2011.

\bibitem{greene}
Joshua Greene.
\newblock Homologically thin, non-quasi-alternating links.
\newblock {\em Math. Res. Lett.}, 17(1):39--49, 2010.

\bibitem{hatcher}
Allen~E. Hatcher.
\newblock A proof of the {S}male conjecture, {${\rm Diff}(S^{3})\simeq {\rm
  O}(4)$}.
\newblock {\em Ann. of Math. (2)}, 117(3):553--607, 1983.

\bibitem{jacob}
Magnus Jacobsson.
\newblock An invariant of link cobordisms from {K}hovanov homology.
\newblock {\em Algebr. Geom. Topol.}, 4:1211--1251 (electronic), 2004.

\bibitem{alternation}
Akio Kawauchi.
\newblock On alternation numbers of links.
\newblock {\em Topology Appl.}, 157(1):274--279, 2010.

\bibitem{khovanov}
Mikhail Khovanov.
\newblock A categorification of the {J}ones polynomial.
\newblock {\em Duke Math. J.}, 101(3):359--426, 2000.

\bibitem{kmmilnord1}
P.~B. Kronheimer and T.~S. Mrowka.
\newblock Gauge theory for embedded surfaces. {I}.
\newblock {\em Topology}, 32(4):773--826, 1993.

\bibitem{kmmilnorsw}
P.~B. Kronheimer and T.~S. Mrowka.
\newblock The genus of embedded surfaces in the projective plane.
\newblock {\em Math. Res. Lett.}, 1(6):797--808, 1994.

\bibitem{kmmilnord2}
P.~B. Kronheimer and T.~S. Mrowka.
\newblock Gauge theory for embedded surfaces. {II}.
\newblock {\em Topology}, 34(1):37--97, 1995.

\bibitem{lee}
Eun~Soo Lee.
\newblock An endomorphism of the {K}hovanov invariant.
\newblock {\em Adv. Math.}, 197(2):554--586, 2005.

\bibitem{livingston}
Charles Livingston.
\newblock Computations of the {O}zsv\'ath-{S}zab\'o knot concordance invariant.
\newblock {\em Geom. Topol.}, 8:735--742 (electronic), 2004.

\bibitem{quasithin}
Ciprian Manolescu and Peter Ozsv{\'a}th.
\newblock On the {K}hovanov and knot {F}loer homologies of quasi-alternating
  links.
\newblock In {\em Proceedings of {G}\"okova {G}eometry-{T}opology {C}onference
  2007}, pages 60--81. G\"okova Geometry/Topology Conference (GGT), G\"okova,
  2008.

\bibitem{signaturemurasugi}
Kunio Murasugi.
\newblock On a certain numerical invariant of link types.
\newblock {\em Trans. Amer. Math. Soc.}, 117:387--422, 1965.

\bibitem{oszhfk}
Peter Ozsv{\'a}th and Zolt{\'a}n Szab{\'o}.
\newblock Holomorphic disks and knot invariants.
\newblock {\em Adv. Math.}, 186(1):58--116, 2004.

\bibitem{oszquasialter}
Peter Ozsv{\'a}th and Zolt{\'a}n Szab{\'o}.
\newblock On the {H}eegaard {F}loer homology of branched double-covers.
\newblock {\em Adv. Math.}, 194(1):1--33, 2005.

\bibitem{oszhfl}
Peter Ozsv{\'a}th and Zolt{\'a}n Szab{\'o}.
\newblock Holomorphic disks, link invariants and the multi-variable {A}lexander
  polynomial.
\newblock {\em Algebr. Geom. Topol.}, 8(2):615--692, 2008.

\bibitem{rasrefined}
Jacob Rasmussen.
\newblock Khovanov's invariant for closed surfaces.
\newblock {\em \texttt{arXiv math.GT 0502527}}, 2005.

\bibitem{rasmussen}
Jacob Rasmussen.
\newblock Khovanov homology and the slice genus.
\newblock {\em Invent. Math.}, 182(2):419--447, 2010.

\bibitem{kwalker}
Kevin Walker\phantom{x}(mathoverflow.net/users/284).
\newblock Is the complete functorial structure for {K}hovanov--{L}ee homology
  known?
\newblock MathOverflow.
\newblock http://mathoverflow.net/questions/70902 (version: 2011-07-22).

\end{thebibliography}

\end{document}